\newcommand{\m}{\mathfrak{m} }
\newcommand{\bP}{\mathbb{\partial}}
\newcommand{\rt}{\rightarrow}
\newcommand{\ov}{\overline}
\newcommand{\F}{\mathcal{F} }
\theoremstyle{plain}
\newtheorem{thm}{Theorem}
\newtheorem{theorem}{Theorem}[section]
\newtheorem{corollary}[theorem]{Corollary}
\newtheorem{lemma}[theorem]{Lemma}
\newtheorem{proposition}[theorem]{Proposition}
\theoremstyle{definition}
\newtheorem{remark}[theorem]{Remark}
\theoremstyle{remark}
\numberwithin{equation}{theorem}
\begin{document}

\title[\MakeLowercase{de} Rham ]{\MakeLowercase{de} Rham  cohomology of $H^1_{(f)}(R)$ where $V(f)$ is a smooth hypersurface in $\mathbb{P}^n$}
 \author{Tony~J.~Puthenpurakal}
\author{Rakesh B. T. Reddy  }
\date{\today}
\address{Department of Mathematics, IIT Bombay, Powai, Mumbai 400 076}

\email{tputhen@math.iitb.ac.in} \email{rakesh@math.iitb.ac.in}

\date{\today}

\subjclass{Primary 13D45; Secondary 13N10 }
\keywords{local cohomology, associated primes, D-modules, Koszul homology}

\begin{abstract}
Let $K$ be a field of characteristic zero, $R = K[X_1,\ldots,X_n]$.  Let $A_n(K)  = K<X_1,\ldots,X_n, \partial_1, \ldots, \partial_n>$  be the $n^{th}$ Weyl algebra over $K$.  We consider the case when $R$ and $A_n(K)$ is graded by giving $\deg X_i = \omega_i $ and $\deg \partial_i = -\omega_i$  for $i =1,\ldots,n$ (here $\omega_i$ are positive integers).  Set $\omega = \sum_{k=1}^{n}\omega_k$. Let $I$ be a graded ideal in $R$. By a result due to Lyubeznik  the local cohomology modules $H^i_I(R)$ are holonomic $A_n(K)$-modules for each $i \geq 0$.
In this article we compute  the de Rham cohomology modules $H^j(\bP ; H^1_{(f)}(R))$
for $j \leq n-2$ when $V(f)$ is a smooth hypersurface in $\mathbb{P}^n$ (equivalently
$A = R/(f)$ is an isolated singularity).
\end{abstract}

\maketitle

\section*{Introduction}
Let $K$ be a field of characteristic zero and let $R = K[X_1,\ldots,X_n]$.
We consider $R$ graded with $\deg X_i  = \omega_i$ for $i = 1,\ldots, n$; here $\omega_i$ are positive integers.  Set
$\m = (X_1,\ldots,X_n)$.
Let $I$ be a graded ideal in $R$. The local cohomology modules $H^*_I(R)$ are clearly graded $R$-modules. Let
$A_n(K)  = K<X_1,\ldots,X_n, \partial_1, \ldots, \partial_n>$  be the $n^{th}$ Weyl algebra over $K$. By a result due to Lyubeznik, see \cite{L}, the local cohomology modules $H^i_I(R)$ are \textit{holonomic} $A_n(K)$-modules for each $i \geq 0$.  We can consider $A_n(K)$ graded by giving $\deg \partial_i = -\omega_i$ for
 $i = 1,\ldots, n$.

 Let $N$ be a graded left $A_n(K)$ module. Now $\bP = \partial_1,\ldots,\partial_n$ are pairwise commuting $K$-linear maps. So we can consider the de Rham complex
$K(\bP;N)$. Notice that the de Rham cohomology modules  $H^*(\bP;N)$ are in general only \textit{graded} $K$-vector spaces. They are finite dimensional if $N$ is holonomic; \cite[Chapter 1, Theorem 6.1]{BJ}.
 In particular $H^*(\bP;H^*_I(R))$ are finite dimensional graded $K$-vector spaces.
 By \cite[Theorem 1]{TJP} the de Rham cohomology modules $H^*(\bP ; H^*_I(R))$ is concentrated in degree
$- \omega$, i.e., $H^*(\bP ; H^*_I(R))_j = 0$
for $j \neq - \omega$.

Let $f$ be a homogenous polynomial in $R$ with $A = R/(f)$ an isolated singularity, i.e., $A_P$ is regular for all homogeneous prime ideals $P \neq \m$. Note that $V(f)$ is a smooth hypersurface in $\mathbb{P}^n$. The main result of this paper is:
\begin{thm}\label{main}
(with hypotheses as above). Then $H^i(\bP; H^1_{(f)}(R)) = 0$ for $i \leq n -2$ and $i \neq 1$. Also $H^1(\bP; H^1_{(f)}(R)) = K$.
\end{thm}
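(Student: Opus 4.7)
The plan is to use the short exact sequence of graded $A_n(K)$-modules
\[
0 \to R \to R_f \to H^1_{(f)}(R) \to 0,
\]
valid since $f$ is a non-zerodivisor and $(f)$ is principal, giving $H^0_{(f)}(R)=0$ and $H^i_{(f)}(R)=0$ for $i\ge 2$. Taking the induced long exact sequence of de Rham cohomology and noting that $K(\bP;R)$ is the algebraic de Rham complex of $\mathbb{A}^n_K$, so that $H^0(\bP;R)=K$ and $H^i(\bP;R)=0$ for $i\ge 1$, the theorem is reduced to showing $H^0(\bP;R_f)=K$, $H^1(\bP;R_f)=K$, and $H^i(\bP;R_f)=0$ for $2\le i\le n-2$.

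Next, I identify $H^*(\bP;R_f)$ with the algebraic de Rham cohomology of the smooth affine variety $U:=\Spec(R_f)=\mathbb{A}^n_K\setminus V(f)$. By Grothendieck's comparison theorem and base-change invariance of algebraic de Rham cohomology in characteristic zero, the $K$-dimensions of $H^i(\bP;R_f)$ equal the Betti numbers of $U(\mathbb{C})$. The variety $U$ is the total space of a $\mathbb{G}_m$-bundle (with possibly finite stabilizers in the weighted case) over $V:=\Proj(R)\setminus Y$, where $Y:=V(f)\cap\Proj(R)$ is smooth by hypothesis.

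To compute $H^*(V)$ I apply the localization long exact sequence of the pair $(\Proj(R),Y)$ together with the Thom isomorphism $H^j_Y(\Proj(R))\cong H^{j-2}(Y)$:
\[
\cdots\to H^{j-2}(Y)\xrightarrow{g} H^j(\Proj(R))\to H^j(V)\to H^{j-1}(Y)\xrightarrow{g} H^{j+1}(\Proj(R))\to\cdots.
\]
The Lefschetz hyperplane theorem gives $H^i(\Proj(R))\xrightarrow{\sim} H^i(Y)$ for $i<n-2$; under this identification the Gysin map $g$ becomes cup product with the hypersurface class $[Y]=d\cdot h$ ($h$ the hyperplane class, $d=\deg f$), which is an isomorphism in the relevant range since $\operatorname{char} K=0$. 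This forces $H^0(V)=K$ and $H^j(V)=0$ for $1\le j\le n-2$. Feeding this into the Gysin sequence of the $\mathbb{G}_m$-bundle $U\to V$,
\[
\cdots\to H^{j-2}(V)\xrightarrow{e} H^j(V)\to H^j(U)\to H^{j-1}(V)\xrightarrow{e} H^{j+1}(V)\to\cdots,
\]
gives $H^0(U)=K$, $H^1(U)=K$, and $H^j(U)=0$ for $2\le j\le n-2$, completing the proof after substitution into the long exact sequence of the first step.

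The main obstacle is the bookkeeping in the weighted-grading setting: checking that the Lefschetz hyperplane theorem and the identification of the Gysin map with multiplication by $d$ remain valid for smooth hypersurfaces in the weighted projective space $\Proj(R)=\mathbb{P}(\omega_1,\ldots,\omega_n)$. Rationally this goes through because $\mathbb{P}(\omega)$ has the same rational cohomology ring as ordinary projective space, and the finite cover $\mathbb{P}^{n-1}\to\mathbb{P}(\omega)$ reduces hypersurface Gysin computations to the standard case. The algebraic vs.\ topological comparison is handled by standard base change for algebraic de Rham cohomology.
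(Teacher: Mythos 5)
Your route is genuinely different from the paper's. The paper never leaves commutative algebra: it builds a pole--order filtration on $H_p(\partial;R_f)$ and maps its graded pieces injectively (up to a one--dimensional kernel when $p=n-1$) into graded components of the Koszul homology $H_p(\partial f;A)$ of the Jacobian ring, then uses $H_i(\partial f;A)=0$ for $i\ge 2$; this works over any field of characteristic zero with no transcendental input. Your reduction via $0\to R\to R_f\to H^1_{(f)}(R)\to 0$ and the algebraic Poincar\'e lemma is correct, the identification of $H^*(\bP;R_f)$ with the algebraic de Rham cohomology of $U=\mathbb{A}^n\setminus V(f)$ is correct, and in the standard-graded case $\omega_1=\cdots=\omega_n=1$ the topological computation ($H^0(U)=H^1(U)=K$, $H^j(U)=0$ for $2\le j\le n-2$) does give the theorem, granted Grothendieck's comparison theorem and base change.

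The genuine gap is in the weighted case, which is the actual generality of the theorem. First, $Y=V_+(f)\subset \Proj R=\mathbb{P}(\omega_1,\ldots,\omega_n)$ is only \emph{quasi-smooth}, not smooth: it can be singular where it meets the singular locus of the weighted projective space, and the ambient space is itself singular, so the smooth-case Thom isomorphism $H^j_Y(\Proj R)\cong H^{j-2}(Y)$ and the Lefschetz hyperplane theorem cannot be quoted as stated; one needs their $\mathbb{Q}$-homology-manifold (V-manifold) versions (Dolgachev, Steenbrink, Dimca), which you do not supply. Second, the proposed reduction via the finite cover $\mathbb{P}^{n-1}\to\mathbb{P}(\omega)$, $[x_i]\mapsto[x_i^{\omega_i}]$, does not work as stated: the preimage of a quasi-smooth hypersurface need not be smooth (e.g.\ $f=x_1x_2+x_3^3$ with weights $(2,1,1)$ pulls back to $x_1^2x_2+x_3^3$, singular at $[0:1:0]$), so the smooth Gysin/Lefschetz computation cannot simply be transported upstairs. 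Third, $U\to V$ is only a Seifert $\mathbb{C}^*$-fibration, and its Gysin sequence needs a rational orbifold (or equivariant) justification. All of this can be repaired --- most cleanly by dropping the projective picture and using the global Milnor fibration $f\colon U\to\mathbb{C}^*$, whose fiber is the Milnor fiber of the weighted homogeneous isolated singularity and hence $(n-2)$-connected, so the Wang sequence yields the required cohomology of $U$ uniformly in the weights --- but as written the weighted case is not proved. Note also that the paper's filtration argument yields more than the dimension count: it places the associated graded pieces of $H^i(\bP;H^1_{(f)}(R))$ inside specific graded components of $H_i(\partial f;A)$.
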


 By \cite[Theorem 2.7]{TJ} we have $H^0(\bP; H^1_{(f)}(R)) = 0$.
In this paper we extend a technique from \cite{TJP}. In that paper the first author related  $H^{n-1}(\bP;  H^1_{(f)}(R))$   with $H^{n-1}(\partial(f);A)$. In this paper quite generally  we prove that if $H^{i-1}(\partial(f);A) = 0$ then we
  construct a filtration  $\F = \{ \F_\nu \}_{\nu\geq 0}$ \quad consisting of $K$-subspaces of \quad $H^{i}(\bP; H^1_{(f)}(R))$  for $i \geq 1$ with $\F_\nu = H^{i}(\bP; H^1_{(f)}(R))$  for $\nu \gg 0$, $\F_\nu \supseteq F_{\nu-1}$ and $\F_0 = 0$ and $K$-linear maps
  $$\eta_\nu \colon \F_\nu/\F_{\nu-1} \rt H^i(\bP f; A)_{(\nu + n-i)\deg f - \omega}.$$
  We also show that $\eta_\nu$ is injective for $\nu \geq 2$. If $i \neq 1$ then $\eta_1$ is also injective. Furthermore  if $i = 1$ then $\ker( \eta_1) = K$. When $A$ is an isolated singularity then note that $H^i(\partial(f); A) = 0$ for $i \leq n-2$. This gives our result.

  We now describe in brief the contents of the paper. In section one we discuss a few preliminaries that we need. In section two we construct certain functions which we need to define $\eta_\nu$. In section three we construct our filtration of $H^i(\bP; H^1_{(f)}(R))$ and prove our result.

 \section{Preliminaries}
 In  this section  we discuss a few preliminary results that we need.
\begin{remark}
Although all the results are stated for de Rham cohomology of a $A_n(K)$-module $M$, we will actually work with
de Rham homology. Note that $H_i(\bP, M) = H^{n-i}(\bP, M)$ for any $A_n(K)$-module $M$. Let $S = K[\partial_1,\ldots,\partial_n]$. Consider it as a subring of $A_n(K)$. Then note that $H_i(\bP, M)$ is the $i^{th}$ Koszul homology module of $M$ with respect to $\bP$.
\end{remark}

\s Let $A$ be commutative ring and $a=a_1, \cdots, a_n \in A.$ Let
$I \subseteq \{1, \cdots , n\},$ $|I|=m.$ Say $I=\{i_1<i_2< \cdots
<i_m\}.$ Let
\[
e_I:=e_{i_1}\wedge e_{i_2}\wedge \cdots \wedge e_{i_m}.
\]
Then the Koszul complex of $A$ with respect to $a$ is
\[
\mathbb{K}(a; A):= 0\rightarrow \mathbb{K}_n
\stackrel{\phi_n}\longrightarrow
\mathbb{K}_{n-1}\stackrel{\phi_{n-1}}\longrightarrow \cdots
\rightarrow \mathbb{K}_1\stackrel{\phi_1}\longrightarrow
\mathbb{K}_0\rightarrow 0.
\]
Here $\mathbb{K}_m=\bigoplus_{|I|=m}Ae_I.$
\[\text{Let} \ \xi=\sum_{|I|=m}\xi_Ie_I \in
\mathbb{K}_m \ \text{ we write } \  \xi=(\xi_I \mid |I|=m ).\ \text{
For the map}\ \mathbb{K}_p\stackrel{\phi_p}\longrightarrow
\mathbb{K}_{p-1},
\]

say $\phi_p(\xi)=U.$ Write $U= (U_J \mid |J|=p-1)$. Then
\[
U_J=\sum_{i\not \in J}(-1)^{\sigma(J\cup \{i\})}\left (a_i\xi_{J\cup
\{i\}}\right ).
\]
Here $J=\{j_1<j_2<\cdots <j_{p-1}\}$ and
\begin{align*}
\sigma(J\cup \{i\}) & = \begin{cases}0, & \text{if} \ i<j_1 ; \\
p, & \text{if} \ i>j_{p-1}\\
r, & \text{if} \ j_{r}<i<j_{r+1}.
\end{cases}
\end{align*}

\s Let $f\in R$ be a homogeneous polynomial. We consider elements of $R^m_f$ as
column-vectors. For $x \in R^m_f$ we write it as $x =
(x_1,\ldots,x_m)^\prime$; here $\prime$ indicates transpose.

 \s Let $f\in R$ be a homogeneous   polynomial. Set
$\partial=\partial_1,\cdots ,
\partial_n.$ Consider the commutative subring
$S=K[\partial_1,\cdots ,
\partial_n]$ of $A_n(K)$. The de Rham complex on a holonomic module $N$ is just the Koszul complex $K(\partial; N)$ of $N$ with respect to
$S$. In particular when $N=R_f$ we have,
\begin{align*}
\mathbb{K}(\partial; R_f)= 0\rightarrow
\mathbb{K}_n\stackrel{\phi_n}\longrightarrow\mathbb{K}_{n-1}\rightarrow
\cdots\mathbb{K}_{p}\stackrel{\phi_p}\longrightarrow
\mathbb{K}_{p-1}\rightarrow \cdots \rightarrow
\mathbb{K}_1\stackrel{\phi_1}\longrightarrow\mathbb{K}_0 \rightarrow
0.
\end{align*}
\begin{align*}
 \text{Here} \quad \mathbb{K}_0=R_f \quad \text{and} \quad \mathbb{K}_p=\bigoplus_{|I|=p}R_f(\omega_{i_1}+ \cdots + \omega_{i_p}) \quad \quad \quad.
\end{align*}
The  maps
$\mathbb{K}_p\stackrel{\phi_p}\longrightarrow \mathbb{K}_{p-1},\ \text{say} \ \xi = (\xi_I \ \mid \ |I|=p)'.$ Then \[\phi_p(\xi)= \left (\sum_{i\not \in J}(-1)^{\sigma(J\cup \{i\})}\left (\frac{\partial}{\partial x_i}\xi_{J\cup
\{i\}}\right ) : \ |J| = p-1 \right ). \]
 \s  Let $f\in R$ be a homogeneous   polynomial. Set $A=R/(f) $, and $\partial f=\partial f/\partial x_1,
\cdots,
\partial f/\partial x_n. $ Consider the Koszul complex $\mathbb{K}'(\partial f ; A)$ on  $A$ with
respect to $\partial f$.
\begin{align*}
\mathbb{K}'(\partial f ; A)=
0\rightarrow\mathbb{K}'_{n}\stackrel{\psi_n}\longrightarrow
\mathbb{K}'_{n-1}\cdots \rightarrow
\mathbb{K}'_p\stackrel{\psi_p}\longrightarrow
\mathbb{K}'_{p-1}\rightarrow \cdots \rightarrow
\mathbb{K}'_1\stackrel{\psi_1}\longrightarrow\mathbb{K}'_0\rightarrow
0.
\end{align*}

\begin{align*}
\text{Here}\quad \mathbb{K}'_p=\bigoplus_{|I|=p}A(-p\deg f
+\omega_{i_1}+ \cdots + \omega_{i_p}).
\end{align*}

\s \label{single degree} By \cite[ Theorem 1]{TJP},  $H_i(\partial ;
R_f)_j=0$ for $j\not = \omega,$ where $\omega = \omega_1+\cdots
+\omega_n.$

 \s Let $\xi \in R_f^m \setminus R^m$.  The element
$(a_1/f^i,a_2/f^i,\ldots,a_m/f^i)^\prime$, with $a_j \in R$ for all
$j$, is said to be a \textit{normal form} of $\xi$ if
\begin{enumerate}
\item
$\xi = (a_1/f^i,a_2/f^i,\ldots,a_m/f^i)^\prime$.
\item
$f$ does not divide $a_j$ for some $j$.
\item
$i\geq 1$.
\end{enumerate}
It can be easily shown that normal form of $\xi$ exists and is
unique (see \cite[Proposition 5.1]{TJP} ).

\s Let $\xi \in  R_f^m$. We define $L(f)$ as follows.

\textit{Case 1:} $\xi \in R_f^m\setminus R^m$.

Let $(a_1/f^i,a_2/f^i,\ldots,a_m/f^i)^\prime$ be the normal form of
$\xi$. Set $L(\xi) = i$. Notice $L(\xi) \geq 1$ in this case.

\textit{Case 2:} $\xi \in R^m\setminus \{0\}$.

Set $L(\xi) = 0$.

\textit{Case 3:} $\xi = 0$.

Set $L(\xi) = -\infty$.

The following properties of the function $L$ can be easily verified.
\begin{proposition}\label{prop-l}
(with hypotheses as above) Let $\xi,\xi_1,\xi_2 \in R_f^m$ and
$\alpha, \alpha_1,\alpha_2 \in K$.
\begin{enumerate}[\rm(1)]
\item
If $L(\xi_1) < L(\xi_2)$ then $L(\xi_1 + \xi_2) = L(\xi_2)$.
\item
 If $L(\xi_1) = L(\xi_2)$ then $L(\xi_1 + \xi_2) \leq L(\xi_2)$.
\item
$L(\xi_1 + \xi_2) \leq \max\{ L(\xi_1) , L(\xi_2) \}.$
\item
 If $\alpha \in K^*$ then $L(\alpha \xi ) = L(\xi)$.
\item
$L(\alpha \xi ) \leq L(\xi)$ for all $\alpha \in K$.
\item
$L(\alpha_1\xi_1 + \alpha_2\xi_2) \leq \max\{ L(\xi_1) , L(\xi_2)
\}.$
\item
Let $\xi_1,\ldots,\xi_r \in R^m_f$ and let $\alpha_1,\ldots,\alpha_r
\in K$. Then
$$L\left(\sum_{j=1}^{r}\alpha_j\xi_j \right) \leq \max \{ L(\xi_1), L(\xi_2),\ldots,L(\xi_r) \}.$$
\end{enumerate}
\end{proposition}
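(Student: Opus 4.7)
The proof is essentially a case analysis based on the three cases in the definition of $L$, combined with the uniqueness of the normal form. The plan is to prove (1) and (2) directly from the definition, and then deduce (3)--(7) as formal consequences.

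For (1), I would first dispose of the degenerate cases: if $\xi_2 = 0$ then $L(\xi_2) = -\infty$, so no $\xi_1$ can have strictly smaller $L$-value. Hence $\xi_2 \neq 0$. Set $i = L(\xi_2) \geq 0$ and $j = L(\xi_1) < i$. Write $\xi_2$ in normal form as $(b_1/f^i, \ldots, b_m/f^i)'$, with $f \nmid b_k$ for some index $k$. Write $\xi_1 = (c_1/f^j, \ldots, c_m/f^j)'$ (interpreting $f^0 = 1$ when $j = 0$, and $\xi_1 = 0$ when $j = -\infty$). Then
\[
\xi_1 + \xi_2 = \left(\frac{f^{i-j}c_1 + b_1}{f^i}, \ldots, \frac{f^{i-j}c_m + b_m}{f^i}\right)'.
\]
Since $i > j$, $f$ divides $f^{i-j}c_k$, and since $f \nmid b_k$, we conclude $f \nmid f^{i-j}c_k + b_k$. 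Thus this expression is already in normal form, so $L(\xi_1 + \xi_2) = i = L(\xi_2)$.

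For (2), again set $i = L(\xi_1) = L(\xi_2)$. If $i = -\infty$ the sum is $0$. If $i = 0$ both $\xi_j \in R^m$ so $\xi_1 + \xi_2 \in R^m$ giving $L \leq 0$. If $i \geq 1$, writing both normal forms with common denominator $f^i$ gives a representative of $\xi_1 + \xi_2$ with denominator $f^i$; this representative may not be in normal form (the numerators could all be divisible by $f$), but passing to normal form can only decrease the denominator power, so $L(\xi_1 + \xi_2) \leq i$. The key point here is uniqueness of the normal form, which is already recorded in the paper.

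Then (3) is immediate from (1) and (2): if the two $L$-values differ, apply (1); if equal, apply (2). For (4) and (5), scaling $\xi$ by a nonzero $\alpha \in K$ leaves the normal form essentially unchanged (one simply scales the numerators, which does not affect divisibility by $f$ since $\alpha \in K^*$), so $L(\alpha \xi) = L(\xi)$; if $\alpha = 0$ then $L(\alpha \xi) = -\infty \leq L(\xi)$. Finally (6) follows by combining (5) and (3), and (7) follows from (6) by induction on $r$. There is no real obstacle here; the only thing to be careful about is keeping track of the three cases (nonzero polynomial, zero, and properly rational with $L \geq 1$) uniformly, which is why (1) needs the small preliminary step of ruling out $\xi_2 = 0$.
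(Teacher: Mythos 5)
Your proof is correct: the case analysis on the definition of $L$, the use of uniqueness of the normal form to handle (1) and (2), and the formal deduction of (3)--(7) all go through without gaps. The paper itself offers no proof of this proposition (it only remarks that the properties ``can be easily verified''), and your argument is exactly the routine verification the authors intended to leave to the reader.
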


\section{Construction of certain functions}
In this section we construct few functions.\\
We  define a function,  $\theta :Z_p(\partial; R_f)\backslash
R^{\binom{n}{p}}\longrightarrow H_p(\partial f ; A)
 $, as follows.\\
Let $\xi \in Z_p(\partial; R_f)\backslash R^{\binom{n}{p}}$ and let
$ (\xi_I/f^c \ \mid \  |I|=p)'$ be the normal form of $\xi.$ As
$\phi_p(\xi)=0.$ We have for every $J$ such that $|J|=p-1$,
\begin{align*}
\sum_{i\not \in J}(-1)^{\sigma(J\cup \{i\})}\frac{\partial}{\partial
x_i} \left ( \frac{\xi_{J\cup\{i\}}}{f^c} \right )=&0.\\
\text{This implies} \quad \sum_{i\not \in J}(-1)^{\sigma(J\cup
\{i\})} \left ( \frac{\frac{\partial}{\partial
x_i}(\xi_{J\cup\{i\}})}{f^c} -
c\xi_{J\cup\{i\}}\frac{\frac{\partial f}{\partial x_i}}{f^{c+1}} \right )=& 0.\\
\text{So} \quad  f.\sum_{i\not \in J}(-1)^{\sigma(J\cup \{i\})}\left
( \frac{\partial}{\partial x_i}(\xi_{J\cup\{i\}})\right )= c.
\sum_{i\not \in J}(-1)^{\sigma(J\cup \{i\})}\left ( \xi_{J\cup\{i\}}
\frac{\partial f}{\partial x_i} \right ).
\end{align*}
Thus $f$ divides $\sum_{i\not \in J}(-1)^{\sigma(J\cup \{i\})}\left
( \xi_{J\cup\{i\}}  \frac{\partial f}{\partial x_i} \right ).$
Therefore
\begin{align*}
(\bar{\xi_I} \ \mid \  |I|=p)'\in Z_p(\partial f ; A).
\end{align*}
Set $\theta(\xi)=[(\bar{\xi_I}\ \mid \  |I|=p)']\in H_p(\partial f ;
A).$\\
The following Lemma identifies the degree of $\theta(\xi)$.
\begin{lemma}\label{deg-theta} Assume $p < n.$ Let $\xi \in Z_p(\partial ; R_f)_{-\omega}$ be non zero.
Then\\
(a)  $\xi
\in  R^{\binom{n}{p}}_f\backslash R^{\binom{n}{p}}.$\\
(b)  If $L(\xi)=c.$ Then $\theta(\xi)\in H_p(\partial f ;
A)_{(c+p)\deg f-\omega}$.
\end{lemma}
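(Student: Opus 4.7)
The plan is to treat the lemma as a careful accounting of grading shifts: translate the hypothesis $\xi \in Z_p(\partial;R_f)_{-\omega}$ into a statement about the intrinsic $R$-degrees of the components of its normal form, and then transport those degrees across to the target Koszul complex $\mathbb{K}'(\partial f; A)$.

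For part (a), I would argue by contradiction. Assume $\xi = (\xi_I)_{|I|=p} \in R^{\binom{n}{p}}$. In the summand $R(\omega_{i_1}+\cdots+\omega_{i_p})$ of $\mathbb{K}_p$, a shifted degree of $-\omega$ corresponds to an intrinsic $R$-degree of $-\omega + (\omega_{i_1}+\cdots+\omega_{i_p}) = -\sum_{j\notin I}\omega_j$. Because $p < n$ and every $\omega_j$ is strictly positive, this quantity is strictly negative, and no nonzero element of the polynomial ring $R$ sits in negative degree. Hence every $\xi_I$ is zero, contradicting $\xi \ne 0$. Therefore $\xi \in R_f^{\binom{n}{p}} \setminus R^{\binom{n}{p}}$, and the normal form $(\xi_I/f^c)_{|I|=p}$ is available.

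For part (b), I would chase degrees explicitly through that normal form. The same shift calculation shows that each $\xi_I/f^c \in R_f$ has intrinsic degree $-\omega + (\omega_{i_1}+\cdots+\omega_{i_p})$, and so $\xi_I \in R$ has intrinsic degree $c\deg f - \omega + (\omega_{i_1}+\cdots+\omega_{i_p})$. The class $\bar{\xi_I} \in A$ inherits this intrinsic degree, and since the $I$-summand of $\mathbb{K}'_p$ carries the shift $-p\deg f + (\omega_{i_1}+\cdots+\omega_{i_p})$, the element $\bar{\xi_I}$ occupies shifted degree
\[
\bigl(c\deg f - \omega + \omega_{i_1}+\cdots+\omega_{i_p}\bigr) - \bigl(-p\deg f + \omega_{i_1}+\cdots+\omega_{i_p}\bigr) = (c+p)\deg f - \omega,
\]
independently of $I$. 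Thus the representing cycle $(\bar{\xi_I})_{|I|=p}$ is homogeneous of degree $(c+p)\deg f - \omega$, and so is its homology class $\theta(\xi) \in H_p(\partial f; A)$.

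There is no real obstacle here beyond keeping the grading conventions straight: the shift $+\omega_{i_1}+\cdots+\omega_{i_p}$ on $\mathbb{K}_p$ and the shift $-p\deg f+\omega_{i_1}+\cdots+\omega_{i_p}$ on $\mathbb{K}'_p$ combine with the $f^c$ of the normal form to leave the clean expression $(c+p)\deg f - \omega$. Note that the hypothesis $p < n$ is used essentially only in part (a), to force $\xi$ out of $R^{\binom{n}{p}}$ so that a normal form (and hence $\theta(\xi)$) is defined; part (b) is then a pure degree computation.
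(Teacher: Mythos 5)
Your proposal is correct and follows essentially the same route as the paper: the positivity of the weights together with $p<n$ forces the intrinsic degree $-\omega+\omega_{i_1}+\cdots+\omega_{i_p}$ to be negative, ruling out nonzero components in $R$ for part (a), and part (b) is the same bookkeeping of the shift on $\mathbb{K}'_p$ against the degree of $\xi_I = a_I$ coming from the normal form, yielding $(c+p)\deg f-\omega$. Your write-up in fact makes explicit the negativity argument that the paper leaves implicit.
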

\begin{proof}
$(a)$ Let $\xi=(\xi_I)'$ be non-zero in $Z_p(\partial ;
R_f)_{-\omega}$. Note that
\[
\xi \in \bigoplus_{|I|=p}(R_f(\omega_{i_1}+\cdots
+\omega_{i_p}))_{-\omega}. \] It follows that
\begin{align*}
\xi_I \in (R_f)_{-\omega + \sum^{p}_{s=1}\omega_{i_s}}.
\end{align*}
It follows that $\xi \in R^{\binom{n}{p}}_f\backslash
R^{\binom{n}{p}}.$

$(b)$ Let $(a_I/f^c \ \mid \ |I|=p)'$ be the normal form of $\xi.$
As $a_I/f^c \in R_f(\omega_{i_1}+\cdots + \omega_{i_p})_{-\omega}.$
It follows that
\begin{align*}
\deg (a_I)= c\deg f-\omega +\sum^p_{s=1}\omega_{i_s}.
\end{align*}
As $\theta(\xi)=[(\bar{a}_I \ \mid \  |I|=p)'].$ Let
\begin{align*}
\bar{a}_I \in A(-p\deg f+\sum^p_{s=1}\omega_{i_s})_t.
\end{align*}
Then
\begin{align*}
\bar{a}_I \in A_{(-p\deg f+\sum^p_{s=1}\omega_{i_s})+t}.
\end{align*}
It follows that
\begin{align*}
t=(c+p)\deg f-\omega.
\end{align*}
Thus $\theta(\xi)\in H_p(\partial f ; A)_{(c+p)\deg f-\omega}$.
\end{proof}
A natural condition we want in $\theta$ is that it vanishes on
boundaries. The following result gives a sufficient condition when
this happens.
\begin{proposition}\label{B_p=0}
Let $p < n$. Assume $H_{p+1}(\partial f ; A)=0$. Then $\theta
(B_p(\partial;R_f)_{-\omega}\backslash \{0\})$
$=0$.
\end{proposition}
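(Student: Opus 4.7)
The plan is to take a nonzero boundary $\xi \in B_p(\partial;R_f)_{-\omega}$, write $\xi = \phi_{p+1}(\eta)$, and compute $\xi$ explicitly by putting $\eta$ in normal form. Lemma \ref{deg-theta}(a) forces $L(\xi) \geq 1$, so $\eta \notin R^{\binom{n}{p+1}}$ and $\eta$ has normal form $(\eta_I/f^d)_I$ with $d = L(\eta) \geq 1$. The Leibniz rule then yields
$$\xi_J = \frac{f\alpha_J - d\beta_J}{f^{d+1}},$$
where $\alpha_J = \sum_{i \notin J}(-1)^{\sigma(J\cup\{i\})}\partial_i\eta_{J\cup\{i\}}$ and $\beta_J = \sum_{i \notin J}(-1)^{\sigma(J\cup\{i\})}\eta_{J\cup\{i\}}\partial_if$; note that $(\beta_J)_J = \psi_{p+1}((\eta_I))$ in the Koszul complex $K(\partial f;R)$.

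I would then split on $L(\xi)$. If $L(\xi) = d+1$, then $f\alpha_J - d\beta_J$ is already the normal-form numerator $\xi^*_J$, and reducing modulo $f$ gives $\overline{\xi^*_J} = -d\,\psi_{p+1}(\bar\eta)_J$, so $\theta(\xi) = -d[\psi_{p+1}(\bar\eta)] = 0$. If $L(\xi) \leq d$, then $f$ divides $f\alpha_J - d\beta_J$, and since the characteristic is zero, $f \mid \beta_J$ for all $J$, i.e.\ $(\bar\eta_I) \in Z_{p+1}(\partial f;A)$. The hypothesis $H_{p+1}(\partial f;A) = 0$ gives $(\bar\eta_I) = \psi_{p+2}(\bar\mu)$ for some $\mu \in R^{\binom{n}{p+2}}$, which I lift to $\eta_I = \psi_{p+2}(\mu)_I + f\tau_I$.

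For $d \geq 2$ the strategy is to modify $\eta$ by a $\phi_{p+2}$-boundary to lower $L(\eta)$. With $\rho_I = \mu_I/f^{d-1}$ and $\eta' = \eta + \tfrac{1}{d-1}\phi_{p+2}(\rho)$, the identity $\phi\phi = 0$ gives $\phi_{p+1}(\eta') = \xi$, and a short calculation collapses the $\psi_{p+2}(\mu)$ term to leave $\eta'_J = [\tau_J + \tfrac{1}{d-1}\phi_{p+2}(\mu)_J]/f^{d-1}$, so $L(\eta') \leq d-1$; iterating either returns to the first case or reaches $d=1$. The case $d=1$ forces $L(\xi) = 1$ and cannot be reduced further without contradicting $L(\xi)\geq 1$, so I handle it by direct computation. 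Substituting $\eta_I = \psi_{p+2}(\mu)_I + f\tau_I$ into $\alpha_J$ and $\beta_J$, and using $\psi\psi = 0$, one obtains $\beta_J = f\,\psi_{p+1}(\tau)_J$ and $\alpha_J = \phi_{p+1}\psi_{p+2}(\mu)_J + \psi_{p+1}(\tau)_J + f\phi_{p+1}(\tau)_J$, whence $\xi^*_J = \alpha_J - \beta_J/f = \phi_{p+1}\psi_{p+2}(\mu)_J + f\phi_{p+1}(\tau)_J$. Applying the anticommutation identity $\phi_{p+1}\psi_{p+2} + \psi_{p+1}\phi_{p+2} = 0$ (a standard Koszul sign cancellation exploiting the symmetry $\partial_i\partial_jf = \partial_j\partial_if$) then gives $\overline{\xi^*_J} = -\psi_{p+1}(\overline{\phi_{p+2}(\mu)})_J$, manifestly a boundary in $K'(\partial f;A)$.

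The principal obstacle is this $d=1$ edge case: the iterative reduction of $L(\eta)$ breaks down there, so one must prove and invoke the anticommutation $\phi\psi + \psi\phi = 0$ and carry out the substitution/cancellation by hand. Everything else is careful bookkeeping of signs and of the $f$-adic valuations produced by the Leibniz formula.
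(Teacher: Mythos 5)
Your argument is correct, and it diverges from the paper's proof at exactly the point you flag. The paper also starts from the Leibniz computation $U_I=(f\ast - c\,\psi_{p+1}(b)_I)/f^{c+1}$ and uses the same $c\ge 2$ reduction trick, but it organizes the proof around a preimage of \emph{minimal} $L$-value: it proves the claim that for this minimal preimage $f$ cannot divide all the $V_I$, handling $c\ge 2$ by contradicting minimality and handling $c=1$ by degree arguments in degree $-\omega$ (with separate subcases $n>p+2$, $n=p+2$, and $p=n-1$), so that only the ``boundary of $\psi_{p+1}$'' computation ever produces $\theta(U)$. You instead drop minimality in favour of an explicit descending iteration on $d=L(\eta)$ (which terminates since every $\phi_{p+1}$-preimage of $\xi$ has $L\ge 1$, as $L(\xi)\ge 1$), and you dispose of the terminal $d=1$ case not by degree bookkeeping but by the anticommutation identity $\phi_{p+1}\psi_{p+2}+\psi_{p+1}\phi_{p+2}=0$, which indeed holds by the Koszul sign identity together with $[\partial_i,\,\partial_kf\cdot\,]=\partial_i\partial_kf$ and the symmetry of mixed partials; this exhibits the normal-form numerators of $\xi$ as $-\psi_{p+1}(\overline{\phi_{p+2}(\mu)})$ modulo $f$, hence a boundary. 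What your route buys is uniformity: no use of the grading beyond $L(\xi)\ge 1$, no case split on $n$ versus $p+2$, and the $p=n-1$ situation degenerates gracefully ($\mu$ empty); what it costs is that you must actually prove the anticommutation identity, which the paper avoids at the price of the degree analysis and the minimal-choice device. Two small points you should make explicit when writing it up: in your Case 1 and in the $d=1$ computation, the displayed vector of numerators over $f^{d+1}$ (respectively over $f$) really is the normal form because otherwise $L(\xi)$ would be strictly smaller than assumed, and in Case 2 the passage from $f\mid f\alpha_J-d\beta_J$ to $f\mid\beta_J$ uses that $d$ is a nonzero integer and $\operatorname{char}K=0$, which you do note.
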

\begin{proof}
Let $U\in B_p(\partial ; R_f)_{-\omega}$ be non zero. As
$B_p(\partial ; R_f)_{-\omega} \subset Z_p(\partial ;
R_f)_{-\omega}$. We get by Lemma \ref{deg-theta}(a) that $U\in
R_f^{\binom{n}{p}}\backslash R^{\binom{n}{p}}.$ 
 Set
  $$c = \min\{ \
j \ | \  j=L(\xi) \ \text{where} \ \phi_{p+1}(\xi)=U \ \text{and} \ \xi \in
(\mathbb{K}_{p+1})_{-\omega}  \}.$$

 Notice $c\geq 1$.
Let $\xi \in (\mathbb{K}_{p+1})_{-\omega}$ be such that $L(\xi)=c$
and $\phi_{p+1}(\xi)=U.$ Let $ (b_G/f^c \ \mid \  |G|=p+1)'$ be the
normal form of $\xi$. Let $U=(U_I \ \mid \  |I|=p)'$. Then
\begin{align*}
U_I =&\sum_{i\not \in I}(-1)^{\sigma(I\cup
\{i\})}\frac{\partial}{\partial x_i}
\left ( \frac{b_{I\cup\{i\}}}{f^c} \right )\\
=&\sum_{i\not \in I}(-1)^{\sigma(I\cup \{i\})} \left ( \frac{\frac{\partial}{\partial x_i}b_{I\cup\{i\}}}{f^c} -
c \frac{b_{I\cup\{i\}}\frac{\partial f}{\partial x_i}}{f^{c+1}}\right ) \\
 =&
\frac{f}{f^{c+1}}\sum_{i\not \in I}(-1)^{\sigma(I\cup
\{i\})}\left ( \frac{\partial}{\partial x_i}(b_{I\cup\{i\}})\right )\\
&-\frac{c}{f^{c+1}}\sum_{i\not \in I}(-1)^{\sigma(I\cup \{i\})}\left
( b_{I\cup\{i\}} \frac{\partial f}{\partial x_i}\right ).
\end{align*}
Set
\begin{align*}
  V_I=-c\sum_{i\not \in I}(-1)^{\sigma(I\cup \{i\})}\left (
b_{I\cup\{i\}} \frac{\partial f}{\partial x_i}\right ).
\end{align*}
Then
\begin{align*}
U_I=\frac{f*+V_I}{f^{c+1}}.
\end{align*}
\textit{Claim}: $f$ does not divides $V_I$ for some $I$ with
$|I|=p.$

First assume the claim. Then $U=(U_I \ \mid \ |I|=p)'=( \
(f*+V_I)/f^{c+1} \ \mid \  |I|=p \ )' $ is the normal form of $U$.
Therefore
\begin{align*}
\theta(U)=[(\bar{V_I})']=[\psi_{p+1}(-c\bar{b}_G \ \mid \
|G|=p+1)']=0.
\end{align*}

We now prove our claim. Suppose if possible $f|V_I$ for all $I$ with
$|I|=p$. Let $b = (b_G \ \mid \ |G| = p+1 )'.$ Then
\begin{align*}
\psi_{p+1}(-c\bar{b})= (\bar{V_I})'=(0,0,\cdots, 0)'.
\end{align*}
So $-c\bar{b}\in Z_{p+1}(\partial f; A).$ As $H_{p+1}(\partial f;
A)=0$, we get $-c\bar{b}\in B_{p+1}(\partial f; A).$ Thus
\begin{align*}
-c\bar{b}=\psi_{p+2}(\bar{r}),\quad \text{here} \quad r=(r_L\in R \
\mid \  |L|=p+2)
\end{align*}
For $p=n-1$ we have
\begin{align*}
-c\bar{b}&=0\\
\Rightarrow \quad cb_G&=\tilde{\alpha}_Gf \quad \text{for some }
\quad \tilde{\alpha}_G \in R.\\
\end{align*}
 Thus $f/b_G$ for all $G$. This contradicts the that
 $(b_G/f^c | \  |G| =p+1)'$ is the normal form of $\xi$. Therefore $p<n-1$. So
\begin{equation}\label{*}
-cb_G=\sum_{k\not \in G}(-1)^{\sigma(G\cup \{k\})}\left ( r_{G\cup
\{k\}}\frac{\partial f}{\partial x_k}\right ) + \tilde{\alpha}_Gf.
\end{equation}
Now we compute the degrees of $r_L$. Note that $\xi \in
(\mathbb{K}_{p+1})_{-\omega}.$ So
\begin{align*}
\frac{b_G}{f^c}\in
R_f(\omega_{i_1}+\cdots+\omega_{i_{p+1}})_{-\omega}.
\end{align*}
It follows that
\begin{equation}\label{**}
\deg b_G = c \deg f -\omega + \sum_{s=1}^{p+1}\omega_{i_s}.
\end{equation}
It can be easily checked that
\begin{align*}
\bar{b}_{I\cup\{i\}}\in A(-(p+1)\deg f + \omega_{i_1}+\cdots
+\omega_{i_{p+1}})_{(c+p+1)\deg f -\omega}.
\end{align*}
So
\begin{align*}
\bar{r}_L\in  A(-(p+2)\deg f + \omega_{i_1}+\cdots
+\omega_{i_{p+2}})_{(c+p+1)\deg f -\omega}.
\end{align*}
It follows that
\begin{equation}\label{***}
\deg r_L=(c-1)\deg f -\omega + \sum_{j=1}^{p+2}\omega_{i_j}.
\end{equation}
\textit{Case(1)}: Let $c=1$. Then by equation (\ref{*}) we get
$\tilde{\alpha}_G=0$. Also notice
\begin{align*}
\deg r_L = -\omega + \sum_{j=1}^{p+2}\omega_{i_j} < 0 \quad
\text{if} \quad n>p+2.
\end{align*}
So if $n>p+2$ we get $r_L=0$. So $b=0$ and $\xi =0$ a contradiction.

Now consider $n=p+2.$ Then $r_L=r_{12\cdots n}=$ constant. Say
$r_L=r$ . So by equation (\ref{*}) it follows
\begin{equation}\label{****}
b=(b_G \ \mid \  |G|=n-1)'=(-r\partial f/\partial x_1, \cdots,
(-1)^{n}r\partial f/\partial x_n)'.
\end{equation}
\[\text{As} \quad  U_I=\sum_{i\not \in I}(-1)^{\sigma(I\cup
\{i\})}\left ( \frac{\partial}{\partial x_i}(b_{I\cup\{i\}}/f)
\right ).\]
 Using equation(\ref{****}) we get $U_I=0$ for all $I$
with $|I|=p.$ Therefore $U=0$ a contradiction.

\textit{Case(2)}: Let $c\geq 2.$ By equation (\ref{*}) we have
\begin{align*}
\frac{-cb_G}{f^c}=\frac{1}{f^c}\sum_{k\not \in G}(-1)^{\sigma(G\cup
\{k\})} \left ( r_{G\cup \{k\}}\frac{\partial f}{\partial x_k}\right
) + \frac{\tilde{\alpha}_G}{f^{c-1}}.
\end{align*}
Notice
\begin{align*}
\frac{r_{G\cup \{k\}}\partial f/\partial
x_k}{f^c}=\frac{\partial}{\partial x_k}\left ( \frac{r_{G\cup
\{k\}}/(1-c)}{f^{c-1}} \right )-\frac{*}{f^{c-1}}.
\end{align*}
\[\text{Put} \quad \tilde{r}_{G \cup \{k\}}=\frac{r_{G \cup
\{k\}}}{c(c-1)}. \quad \text{ We obtain} \]
\begin{align*}
\frac{b_G}{f^c}=\sum_{k\not \in G}(-1)^{\sigma(G \cup \{k\})}\left (
\frac{\partial }{\partial x_k} \left ( \frac{\tilde{r}_{G \cup
\{k\}}}{f^{c-1}} \right ) \right ) + \frac{*}{f^{c-1}}.
\end{align*}
Set
\begin{align*}
 \delta = \left ( \frac{\tilde{r}_{G \cup \{k\}}}{f^{c-1}} \right )\quad \text{and}
 \quad
\tilde{\xi}= \left ( \frac{*}{f^{c-1}} \right ).
\end{align*}
 Then
\begin{align*}
\xi = \phi_{p+2}(\delta) + \tilde{\xi}.
\end{align*}
So we have $U=\phi_{p+1}(\xi)=\phi_{p+1}(\tilde{\xi})$ and $L(\xi)
\leq c-1$. This contradicts our choice of $c.$
\end{proof}

\section{Construction of a filtration on $H_p(\partial; R_f)$.}
In this section we construct a filtration of $H_p(\partial ; R_f).$
Throughout this section $1 \leq p < n$ and  $H_{p+1}(\partial f;
A)=0.$

 \s By \ref{single degree} we have
\begin{align*}
H_p(\partial ; R_f)=H_p(\partial ; R_f)_{-\omega}=\frac{Z_p(\partial
; R_f)_{-\omega}}{B_p(\partial ; R_f)_{-\omega}}.
\end{align*}

Let $x\in H_p(\partial ; R_f) $ be \textit{non-zero}. Define
\begin{align*}
L(x)=min\{ \  L(\xi) \  | \quad x=[\xi],\quad \text{where}\quad \xi
\in Z_p(\partial ; R_f)_{-\omega} \}.
\end{align*}

Let $\xi = (\xi_I/f^c \ \mid \ |I| = p )'\in Z_p(\partial ;
R_f)_{-\omega}$ be such that $x=[\xi].$ So $\xi \in
(\mathbb{K}_p)_{-\omega}.$ Thus $\xi \in
R^{\binom{n}{p}}_f(\omega_{i_1}+\cdots + \omega_{i_p})_{-\omega} $.
So if $\xi \not =0$ then $\xi \in R^{\binom{n}{p}}_f\backslash
R^{\binom{n}{p}}.$ It follows that $L(\xi)\geq 1.$ Thus $L(x)\geq
1.$ If $x=0$ set $L(x)= - \infty$.

    We now define a function
\begin{align*}
\tilde{\theta}: H_p(\partial; R_f)&\rightarrow H_p(\partial f ; A)\\
x &\rightarrow
\begin{cases} 0 & \text{if} \ x = 0 \\ \theta(\xi)& \text{if} \
x\not =0,x=[\xi],\quad\text{and}\quad L(x)=L(\xi).
\end{cases}
\end{align*}

\begin{proposition}
(with hypothesis as above ) $\tilde{\theta}(x)$ is independent of
$\xi.$
\end{proposition}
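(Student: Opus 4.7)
The plan is to show that if $\xi_1,\xi_2 \in Z_p(\partial;R_f)_{-\omega}$ both satisfy $[\xi_1]=[\xi_2]=x$ with $L(\xi_1)=L(\xi_2)=L(x)=c$, then $\theta(\xi_1)=\theta(\xi_2)$. Set $U=\xi_1-\xi_2$. Then $U\in B_p(\partial;R_f)_{-\omega}$. If $U=0$ we are done, so assume $U\ne 0$. By Proposition \ref{prop-l}(3) we have $L(U)\leq c$, and I split into two cases.

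\textbf{Case A: $L(U)=c$.} Let $\xi_1=(a_I/f^c)'$ and $\xi_2=(a'_I/f^c)'$ be the normal forms. Then $U=((a_I-a'_I)/f^c)'$. The hypothesis $L(U)=c$ forces $f\nmid(a_I-a'_I)$ for some $I$, so this expression is itself the normal form of $U$. Applying $\theta$ to cycles written with a common denominator is $K$-linear in the numerators, hence
\[
\theta(\xi_1)-\theta(\xi_2)=\bigl[(\overline{a_I-a'_I})'\bigr]=\theta(U).
\]
Since $U\in B_p(\partial;R_f)_{-\omega}$, Proposition \ref{B_p=0} (which applies because we are assuming $H_{p+1}(\partial f;A)=0$) gives $\theta(U)=0$, and so $\theta(\xi_1)=\theta(\xi_2)$.

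\textbf{Case B: $L(U)<c$.} Let $d=L(U)$ and let $(u_I/f^d)'$ be the normal form of $U$. Since $d<c$, each coordinate can be rewritten as $u_I/f^d=(u_I f^{c-d})/f^c$, and from $\xi_1=\xi_2+U$ we obtain
\[
\xi_1=\bigl((a'_I+u_I f^{c-d})/f^c\bigr)'.
\]
Because $c-d\geq 1$, each $u_I f^{c-d}$ lies in $(f)$, so $\overline{a'_I+u_I f^{c-d}}=\bar a'_I$ in $A$. Moreover, since the $a'_I$ are the numerators of the normal form of $\xi_2$, $f$ fails to divide $a'_I$ for some $I$, hence $f$ fails to divide $a'_I+u_I f^{c-d}$ for that same $I$, so the displayed expression is the normal form of $\xi_1$. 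Therefore $\theta(\xi_1)=[(\overline{a'_I+u_I f^{c-d}})']=[(\bar a'_I)']=\theta(\xi_2)$.

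The only delicate point is the verification in Case A that $((a_I-a'_I)/f^c)'$ is genuinely the normal form of $U$; this is exactly what the assumption $L(U)=c$ provides, and without it the linearity step would be unjustified. Everything else is formal manipulation of normal forms together with a single invocation of Proposition \ref{B_p=0} to kill the boundary contribution.
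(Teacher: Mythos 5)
Your proposal is correct and follows essentially the same route as the paper: write the difference $\xi_1-\xi_2$ as a boundary, split into the cases where its $L$-value is strictly less than $c$ (numerators of $\xi_1,\xi_2$ agree modulo $f$) or equal to $c$ (the difference of classes is $\theta$ of a boundary, killed by Proposition \ref{B_p=0} under the standing hypothesis $H_{p+1}(\partial f;A)=0$). The only difference is cosmetic: you verify explicitly that the relevant expressions are normal forms, a point the paper leaves implicit.
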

\begin{proof}
Suppose $x=[\xi_1]=[\xi_2] $ is non zero and
$L(x)=L(\xi_1)=L(\xi_2)=c.$ Let $(a_I/f^c)'$ be the normal form of
$\xi_1$ and $(b_I/f^c)'$ be the normal of $\xi_2.$ As
$[\xi_1]=[\xi_2]$ it follows that $\xi_1=\xi_2+\delta$ for some
$\delta \in B_p(\partial ; R_f)_{-w}.$ We get $j=L(\delta)\leq c$ by
\ref{prop-l}. Let $(c_I/f^j)'$ be the normal form of $\delta.$ We
consider two cases.

\textit{Case$(1)$}: $j<c.$ Then note that $a_I=b_I+f^{c-j}c_I$,  for
$|I|=p.$ It follows that
\begin{align*}
\theta(\xi_1)=[(\bar{a_I})']=[(\bar{b_I})']=\theta(\xi_2).
\end{align*}
\textit{Case$(2)$}: $j=c.$ Note that $a_I=b_I+c_I$ for $|I|=p.$ It
follows that
\begin{align*}
\theta(\xi_1)=\theta(\xi_2)+\theta(\delta).
\end{align*}
However by Proposition \ref{B_p=0} $\theta(\delta)=0.$ So
$\theta(\xi_1)=\theta(\xi_2).$ Hence $\tilde{\theta}(x)$ is
independent of choice of $\xi.$
\end{proof}
\s We now construct a filtration
$\mathcal{F}=\{\mathcal{F}_{\upsilon}\}_{\upsilon \geq 0}$ of
$H_p(\partial ; R_f).$ Set
\begin{align*}
\mathcal{F}_{\upsilon}=\{x \in H_p(\partial ; R_f)\quad | \quad
L(x)\leq \upsilon \}.
\end{align*}

\begin{proposition}
$(1)$ $\mathcal{F}_{\upsilon}$ is a $K-$subspace of $H_p(\partial ;
R_f).$\\
$(2)$  $\mathcal{F}_{\upsilon -1}\subseteq \mathcal{F}_{\upsilon} $
for all $\upsilon \geq 1.$\\
$(3)$ $\mathcal{F}_{\upsilon}=H_p(\partial ; R_f)$ for all $\upsilon
\gg 0.$\\
$(4)$ $\mathcal{F}_0 = 0.$
\end{proposition}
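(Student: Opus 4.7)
The plan is to verify the four properties directly from the definition $\mathcal{F}_{\upsilon} = \{x \in H_p(\partial; R_f) \mid L(x) \leq \upsilon\}$, using Proposition \ref{prop-l} together with the observation already noted above that $L(x) \geq 1$ for every nonzero $x \in H_p(\partial; R_f)$, plus the finite-dimensionality of $H_p(\partial; R_f)$.

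For $(1)$, I would first note that $0 \in \mathcal{F}_{\upsilon}$ since $L(0) = -\infty$, and then check closure under the $K$-vector space operations as follows. Given $x, y \in \mathcal{F}_{\upsilon}$, pick representative cycles $\xi_1, \xi_2 \in Z_p(\partial; R_f)_{-\omega}$ realizing the minima $L(x) = L(\xi_1)$ and $L(y) = L(\xi_2)$, both $\leq \upsilon$. Then $x+y = [\xi_1 + \xi_2]$, and Proposition \ref{prop-l}(3) gives $L(\xi_1 + \xi_2) \leq \max\{L(\xi_1), L(\xi_2)\} \leq \upsilon$; since $L(x+y)$ is defined as the infimum over representatives, this forces $L(x+y) \leq \upsilon$. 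Scalar multiplication is handled the same way via Proposition \ref{prop-l}(5). Part $(2)$ is immediate from the definition, and for part $(4)$, $x \in \mathcal{F}_0$ forces $L(x) \leq 0$, but every nonzero class has $L(x) \geq 1$, whence $\mathcal{F}_0 = 0$.

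The only nontrivial item is $(3)$. Here I would invoke the fact that $R_f$ is a holonomic $A_n(K)$-module, so $H_p(\partial; R_f)$ is a finite-dimensional $K$-vector space. Fix a finite $K$-basis $x_1, \ldots, x_r$ of $H_p(\partial; R_f)$ and set $N = \max_i L(x_i)$. By part $(1)$, any $K$-linear combination $\sum_i \alpha_i x_i$ then lies in $\mathcal{F}_N$, so $\mathcal{F}_{\upsilon} = H_p(\partial; R_f)$ for all $\upsilon \geq N$. I do not anticipate any serious obstacle; the only subtlety is that $L$ on a homology class is defined as a minimum over representative cycles, so in $(1)$ one must first lift to cycles actually achieving $L(x)$ and $L(y)$ before applying the inequalities of Proposition \ref{prop-l}.
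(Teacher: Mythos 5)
Your proposal is correct and follows essentially the same route as the paper: lift to representative cycles achieving $L$, apply Proposition \ref{prop-l} for closure under addition and scalar multiplication, note $L(x)\geq 1$ for nonzero classes to get $(4)$, and use finite-dimensionality of $H_p(\partial;R_f)$ (holonomicity of $R_f$) with the maximum of $L$ over a basis for $(3)$. Your shortcut in $(3)$ of citing part $(1)$ instead of re-estimating $L$ of a linear combination is a harmless compression of the paper's argument.
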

\begin{proof}
$(1) $  Let $x\in \mathcal{F}_{\upsilon}$ and let $\alpha \in K.$
Let $x=[\xi]$ with $L(x)=L(\xi) \leq \upsilon.$ Then $\alpha
x=[\alpha \xi]$. So
\begin{align*}
L(\alpha x)\leq L(\alpha \xi)\leq \upsilon.
\end{align*}
So $\alpha x \in \mathcal{F}_{\upsilon}$.

Let $x, x' \in \mathcal{F}_{\upsilon}$ be non-zero. Let $\xi, \xi'
\in Z_p(\partial ; R_f)$ be such that $x=[\xi],$ $x'=[\xi']$ and
$L(x)=L(\xi),$ $L(x')=L(\xi').$ Then $x+x'=[\xi + \xi'].$ It follows
that
\begin{align*}
L(x+x')\leq L(\xi+\xi')\leq max\{L(\xi), L(\xi')  \}\leq \upsilon.
\end{align*}
Thus $x+x' \in \mathcal{F}_{\upsilon}$.

$(2)$ This is clear from the definition.

$(3)$ Let $\mathcal{B}=\{ x_1, \cdots, x_l \}$ be a $K-$ basis of
$H_p(\partial ; R_f)=H_p(\partial ; R_f)_{-\omega}$. Let
\begin{align*}
c=max\{L(x_i) \quad | \quad i=1,\cdots, l\}.
\end{align*}
We claim that
\begin{align*}
\mathcal{F}_{\upsilon}= H_p(\partial ; R_f) \quad \text{for
all}\quad \upsilon \geq c.
\end{align*}
Fix $\upsilon \geq c.$ Let $\xi_i\in Z_p(\partial ; R_f)_{-\omega}$
be such that $x_i=[\xi_i]$ and $L(x_i)= L(\xi_i)$ for $i=1,\cdots,
l$.

Let $u\in H_p(\partial ; R_f).$ Say $u=\sum^l_{i=1}\alpha_ix_i$ for
some $\alpha_1,\cdots , \alpha_l \in K$. Then
$u=[\sum^l_{i=1}\alpha_i\xi_i]$. It follows that
\begin{align*}
L(u)\leq L(\sum^l_{i=1}\alpha_i \xi_i)\leq max\{ L(\xi)\quad | \quad
i=1,\cdots , l \}=c\leq \upsilon.
\end{align*}
So $u\in \mathcal{F}_{\upsilon}$. Hence
$\mathcal{F}_{\upsilon}=H_p(\partial ; R_f)$.

$(4)$ If $x\in H_p(\partial ; R_f)$ is non-zero then $L(x)\geq 1.$
Therefore $\mathcal{F}_0=0$.
\end{proof}
\s Let $\mathcal{G}=\bigoplus_{\upsilon\geq
1}\mathcal{F}_{\upsilon}/\mathcal{F}_{\upsilon-1}$. For
$\upsilon\geq 1$ we define
\begin{align*}
\eta_{\upsilon}:\frac{\mathcal{F}_{\upsilon}}{\mathcal{F}_{\upsilon-1}}&
\rightarrow H_p(\partial f ; A)_{(\upsilon + p)\deg f - \omega}\\
u & \rightarrow
\begin{cases} 0 & \text{if} \ u = 0 \\ \tilde{\theta}(x)& \text{if} \ u=x+ \mathcal{F}_{\upsilon -1} \quad \text{is non-zero}.
\end{cases}
\end{align*}

\begin{proposition}
(with hypothesis as above) $ \eta_{\upsilon}(u)$ is independent of
choice of $x$.
\end{proposition}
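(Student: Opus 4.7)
\emph{Plan.} The goal is to show that if $u = x + \F_{\upsilon-1} = x' + \F_{\upsilon-1}$ is non-zero in $\F_\upsilon/\F_{\upsilon-1}$, then $\tilde{\theta}(x) = \tilde{\theta}(x')$. Since $u \neq 0$, neither $x$ nor $x'$ lies in $\F_{\upsilon-1}$, so $L(x) = L(x') = \upsilon$. Choose cycles $\xi, \xi' \in Z_p(\partial; R_f)_{-\omega}$ representing $x, x'$ with $L(\xi) = L(\xi') = \upsilon$; by the well-definedness of $\tilde{\theta}$ established earlier, these compute $\tilde{\theta}(x) = \theta(\xi)$ and $\tilde{\theta}(x') = \theta(\xi')$. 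The difference $y := x - x'$ lies in $\F_{\upsilon-1}$. If $y = 0$ there is nothing to prove; otherwise pick $\gamma \in Z_p(\partial; R_f)_{-\omega}$ with $y = [\gamma]$ and $L(\gamma) = L(y) \leq \upsilon - 1$. Setting $\beta := \xi - \xi' - \gamma$ yields a boundary $\beta \in B_p(\partial; R_f)_{-\omega}$, and by Proposition~\ref{prop-l}, $L(\beta) \leq \upsilon$.

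I would then split into the cases $L(\beta) < \upsilon$ and $L(\beta) = \upsilon$. In the first, $L(\xi - \xi') \leq \upsilon - 1$, and comparing the normal forms $(a_I/f^\upsilon)'$ and $(b_I/f^\upsilon)'$ of $\xi$ and $\xi'$ against the normal form of $\xi - \xi'$ (whose denominator $f^j$ satisfies $j \leq \upsilon - 1$) shows that $f \mid a_I - b_I$ for every $I$. Reducing modulo $f$ gives $\theta(\xi) = \theta(\xi')$ immediately. In the second case, writing the normal form of $\beta$ as $(e_I/f^\upsilon)'$ and clearing denominators in the identity $\xi = \xi' + \gamma + \beta$, one obtains $a_I \equiv b_I + e_I \pmod{f}$ for every $I$, so that $\theta(\xi) = \theta(\xi') + \theta(\beta)$ in $H_p(\partial f; A)$.

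The genuine obstacle is the second case, where one must verify that $\theta(\beta) = 0$. This is precisely the content of Proposition~\ref{B_p=0}, whose hypothesis $H_{p+1}(\partial f; A) = 0$ is in force throughout this section. Once that vanishing is invoked, the second case collapses to $\theta(\xi) = \theta(\xi')$ as well, and in either case $\tilde{\theta}(x) = \tilde{\theta}(x')$, proving that $\eta_\upsilon(u)$ is well-defined. Beyond this key input, the argument is a bookkeeping exercise in normal forms and in the sub-additivity properties of $L$ recorded in Proposition~\ref{prop-l}.
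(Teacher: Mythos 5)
Your proposal is correct and follows essentially the same route as the paper: write $\xi - \xi'$ as a lower-$L$ cycle plus a boundary, compare normal forms, split into the cases where the boundary's $L$-value is $< \upsilon$ or $= \upsilon$, and in the latter case invoke Proposition~\ref{B_p=0} (using the standing hypothesis $H_{p+1}(\partial f; A)=0$) to conclude $\theta$ of the boundary vanishes. The only differences are cosmetic (your $\gamma,\beta$ are the paper's $\delta,\alpha$), so nothing further is needed.
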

\begin{proof}
Suppose $u= x+\mathcal{F}_{\upsilon-1}=x'+ \mathcal{F}_{\upsilon-1}$
be non-zero. Then $x=x'+y$ where $y\in \mathcal{F}_{\upsilon-1}$. As
$u\not = 0$ we have $x, x' \in \mathcal{F}_{\upsilon} \backslash
\mathcal{F}_{\upsilon-1}$. So $L(x)=L(x')=\upsilon$. Say $x=[\xi]$,
$x'=[\xi']$, and $y=[\delta]$ where $\xi, \xi', \delta \in
Z_p(\partial ; R_f)_{-\omega}$ with $L(\xi)=L(\xi')=\upsilon$ and
$L(\delta)=L(y)=k\leq \upsilon-1$. So we have $\xi= \xi'+ \delta +
\alpha$ where $\alpha \in B_p(\partial ; R_f)_{-\omega}$. Let
$L(\alpha)=r.$ Not that $r\leq \upsilon$.

Let $(a_I/f^{\upsilon})',$ $(a'_I/f^{\upsilon})',$ $(b_I/f^k)'$ and
$(c_I/f^r)'$ be normal forms of $\xi, \xi',\delta$ and $\alpha$
respectively, here $|I|=p.$ So we have
\begin{align*}
a_I=a'_I+ f^{\upsilon-k}b_I + f^{\upsilon-r}c_I\quad \text{with}
\quad |I|=p.
\end{align*}

Case$(1)$: $r<\upsilon$. In this case we have that
$\bar{a}_I=\bar{a'}_I$ in $A$. So $\theta(\xi)=\theta(\xi')$. Thus
$\tilde{\theta}(x)=\tilde{\theta}(x')$.

Case$(2)$: $r=\upsilon$. In this case we have $\bar{a}_I=\bar{a'}_I
+ \bar{c}_I$ in $A$. So $\theta{(\xi)}=\theta{(\xi')}+
\theta{(\alpha)} $. However $\theta{(\alpha)}=0$ as $\alpha \in
B_p(\partial ; R_f)_{-\omega}$. Thus
$\tilde{\theta}(x)=\tilde{\theta}(x').$
\end{proof}

\begin{proposition}
(with notation as above). For all $\upsilon \geq 1$,
$\eta_{\upsilon}$ is $K-$linear.
\end{proposition}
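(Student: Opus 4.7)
My plan is to verify $K$-linearity by checking homogeneity and additivity separately, using the fact that $\theta$ is essentially ``linear on normal forms'' together with Proposition \ref{B_p=0} to absorb boundary error terms.

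For homogeneity, given $\alpha \in K$ and $u = x + \mathcal{F}_{\upsilon-1}$, the cases $\alpha = 0$ or $u = 0$ are trivial. When both are non-zero, $L(x) = \upsilon$ forces $L(\alpha x) = \upsilon$ (since $\alpha \in K^*$); picking a representative $\xi$ of $x$ with $L(\xi) = \upsilon$ and normal form $(a_I/f^\upsilon)'$, the vector $\alpha \xi$ has normal form $(\alpha a_I/f^\upsilon)'$, so $\theta(\alpha\xi) = \alpha\,\theta(\xi)$, which gives $\eta_\upsilon(\alpha u) = \alpha\, \eta_\upsilon(u)$.

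For additivity, I will fix non-zero classes $u = x + \mathcal{F}_{\upsilon-1}$ and $v = y + \mathcal{F}_{\upsilon-1}$, choose representatives $\xi, \eta$ of $x, y$ with $L(\xi) = L(\eta) = \upsilon$ and normal forms $(a_I/f^\upsilon)'$ and $(b_I/f^\upsilon)'$, and split on whether $u + v$ is zero. If $u + v \neq 0$, then $L(x+y) = \upsilon$ forces $L(\xi + \eta) = \upsilon$, so $((a_I+b_I)/f^\upsilon)'$ is a normal form and $\theta(\xi + \eta) = \theta(\xi) + \theta(\eta)$ on the nose; the well-definedness of $\tilde\theta$ established in the previous proposition lets me use $\xi + \eta$ as a representative of $x+y$, yielding $\eta_\upsilon(u+v) = \eta_\upsilon(u) + \eta_\upsilon(v)$. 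If instead $u + v = 0$, i.e.\ $L(x+y) \leq \upsilon - 1$, I must show the sum on the right vanishes: in the easy sub-case $L(\xi+\eta) < \upsilon$ the divisibility $f \mid (a_I + b_I)$ immediately gives $\theta(\xi) + \theta(\eta) = 0$, while in the sub-case $L(\xi+\eta) = \upsilon$ I write $\xi + \eta = \delta + \alpha$ where $\delta$ is a representative of $x+y$ with $L(\delta) = L(x+y)$ (with the convention $\delta = 0$ if $x+y = 0$) and $\alpha \in B_p(\partial; R_f)_{-\omega}$; by Proposition \ref{prop-l}(1) we have $L(\alpha) = \upsilon$, and comparing normal forms modulo $f$ identifies $\theta(\xi + \eta)$ with $\theta(\alpha)$, which vanishes by Proposition \ref{B_p=0}.

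The main obstacle is precisely this last sub-case. Producing $\alpha$ with $L(\alpha) = \upsilon$ and reading off its normal-form coefficients from the decomposition $\xi+\eta = \delta + \alpha$ mirrors the maneuver that powered the well-definedness proof of $\tilde\theta$; the new ingredient is that Proposition \ref{B_p=0} must be applied to a boundary whose normal-form denominator is the \emph{maximum} of the two $L$-values appearing in the decomposition, which is why keeping everything in strict normal form is essential.
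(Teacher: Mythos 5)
Your proof is correct, and it follows the paper's argument verbatim in the two main situations (the scalar case, and the case where $u$, $v$ and $u+v$ are all non-zero: there too the paper argues $L(x+x')=\upsilon$ forces $L(\xi+\xi')=\upsilon$, so the normal forms add and $\theta(\xi+\xi')=\theta(\xi)+\theta(\xi')$). Where you genuinely diverge is the case $u+v=0$. The paper handles it in one line: since $u=-v$, the already-established homogeneity with $\alpha=-1$ gives $\eta_\upsilon(u)=-\eta_\upsilon(v)$, so the sum vanishes and no further computation is needed. You instead prove directly that $\theta(\xi)+\theta(\eta)=0$, splitting on $L(\xi+\eta)<\upsilon$ (where $f$ divides every $a_I+b_I$) versus $L(\xi+\eta)=\upsilon$, where you write $\xi+\eta=\delta+\alpha$ with $\delta$ a minimal-$L$ representative of $x+y$ and $\alpha\in B_p(\partial;R_f)_{-\omega}$, use Proposition \ref{prop-l}(1) to get $L(\alpha)=\upsilon$, and then kill $\theta(\alpha)$ by Proposition \ref{B_p=0}. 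This is exactly the maneuver used for the well-definedness of $\tilde{\theta}$ and of $\eta_\upsilon$, so nothing new is needed and the standing hypotheses of the section ($1\le p<n$, $H_{p+1}(\partial f;A)=0$) cover the appeal to Proposition \ref{B_p=0}. The trade-off: the paper's shortcut is cleaner and avoids any normal-form bookkeeping in this case, while your version is more uniform (all cases reduce to ``boundaries die under $\theta$'') at the cost of repeating the decomposition argument; just make sure, as you do, that the boundary you feed into Proposition \ref{B_p=0} is non-zero and sits in degree $-\omega$, so the proposition applies.
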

\begin{proof}
Let $u, u' \in \mathcal{F}_{\upsilon}/\mathcal{F}_{\upsilon-1}$. We
first show that $\eta_{\upsilon}(\alpha u)=\alpha
\eta_{\upsilon}(u)$. If $\alpha=0$ or $u=0$ we have nothing to show.
So assume $\alpha \not =0$ and $u\not =0$. Say $u=x+
\mathcal{F}_{\upsilon-1}.$ Then $\alpha u= \alpha x +
\mathcal{F}_{\upsilon-1}.$ It can be easily shown that $\tilde{\theta}(\alpha x)=\alpha
\tilde{\theta}(x)$. So we get the result.

Next we show that $\eta_{\upsilon}(u+u')= \eta_{\upsilon}(u)+
\eta_{\upsilon}(u')$. We have nothing to show if $u$ or $u'$ is
zero. Now consider the case when $u+u'=0$. Then $u=-u'$. So
$\eta_{\upsilon}(u)=-\eta_{\upsilon}(u')$. Thus in this case
\begin{align*}
\eta_{\upsilon}(u+u')=0=\eta_{\upsilon}(u)+\eta_{\upsilon}(u').
\end{align*}
Now consider the case when $u,u'$ are non-zero and $u+u'$ non-zero.
Say $u=x+ \mathcal{F}_{\upsilon-1}$ and $u'= x'+
\mathcal{F}_{\upsilon-1}$. Note that as $u+u'$ is non-zero $x+x'\in
\mathcal{F}_{\upsilon}\backslash \mathcal{F}_{\upsilon-1}$. Let
$x=[\xi]$ and $x'=[\xi']$ where $\xi, \xi' \in Z_p(\partial;
R_f)_{-\omega}$ and $L(\xi)= \ L(\xi')=\upsilon$. Then
$x+x'=[\xi+\xi']$. Note that $L(\xi + \xi')\leq \upsilon$. But
$L(x+x')= \upsilon.$ So $L(\xi + \xi') = \upsilon$. Let
$(a_I/f^{\upsilon})',$ $(a'_I/f^{\upsilon})'$ be the normal forms of
$\xi,$ and $\xi'$ respectively. Note that
$((a_I+a'_I)/f^{\upsilon})'$ is the normal form of $\xi+ \xi'$. It
follows that $\theta(\xi+\xi')= \theta(\xi)+ \theta(\xi')$. Thus
$\tilde{\theta}(x+x')=\tilde{\theta}(x)+\tilde{\theta}(x')$.
Therefore
\begin{align*}
\eta_{\upsilon}(u+u')= \eta_{\upsilon}(u)+\eta_{\upsilon}(u').
\end{align*}
\end{proof}
Surprisingly the following result holds.
\begin{proposition}(with notation as above).\\
(a) $\eta_{\upsilon}$ is injective for all $\upsilon \geq 2$.\\
(b) If $p\not= n-1.$ Then $\eta_1$ also injective.\\
(c) If $p=n-1.$ Then $\ker(\eta_1)= K.$
\end{proposition}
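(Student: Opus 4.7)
The plan for all three parts begins with a common setup. Suppose $u = x + \mathcal{F}_{\upsilon-1}$ is a nonzero element of $\ker \eta_\upsilon$; choose $\xi \in Z_p(\partial; R_f)_{-\omega}$ with $x = [\xi]$ and $L(\xi) = L(x) = \upsilon$, and let $(a_I/f^\upsilon \mid |I| = p)'$ be its normal form. The hypothesis $\eta_\upsilon(u) = 0$ says $\theta(\xi) = 0$ in $H_p(\partial f; A)$, so $(\bar a_I) \in B_p(\partial f; A)$. Lifting a bounding chain back to $R$ yields $r_G \in R$ (for $|G| = p+1$) and $\alpha_I \in R$ with
\[
a_I = \sum_{k \notin I}(-1)^{\sigma(I \cup \{k\})} r_{I \cup \{k\}}\frac{\partial f}{\partial x_k} + \alpha_I f \qquad (|I| = p).
\]

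For part (a), with $\upsilon \geq 2$, set $\tilde r_G = -r_G/(\upsilon - 1)$ and $\beta = (\tilde r_G/f^{\upsilon-1})' \in \mathbb{K}_{p+1}$. A direct calculation, paralleling Case~(2) in the proof of Proposition~\ref{B_p=0}, shows that the coefficient of $1/f^\upsilon$ in $\phi_{p+1}(\beta)_I$ is exactly $\sum_{k \notin I}(-1)^{\sigma(I \cup \{k\})} r_{I \cup \{k\}}\partial f/\partial x_k$, which cancels the corresponding term of $\xi_I$. Therefore $L(\xi - \phi_{p+1}(\beta)) \leq \upsilon - 1$, and since $\phi_{p+1}(\beta)$ is a boundary, $x = [\xi - \phi_{p+1}(\beta)]$, forcing $L(x) \leq \upsilon - 1$ and hence $x \in \mathcal{F}_{\upsilon-1}$, a contradiction.

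For part (b), with $\upsilon = 1$ and $p \leq n-2$, the construction of $\beta$ breaks down due to division by zero. Instead use a degree argument: the displayed equation forces $\deg r_G = -\omega + \sum_{j \in G}\omega_j$; since $|G| = p+1 \leq n-1$ and all $\omega_i > 0$, the set $G$ omits at least one index, so $\deg r_G < 0$ and hence $r_G = 0$. Then $a_I = \alpha_I f$ for every $I$, contradicting the defining property of the normal form.

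Part (c) is the main obstacle. When $p = n-1$ and $\upsilon = 1$, $|G| = n$ forces $G = \{1,\ldots,n\}$, so the unique $r_G$ has degree zero and is a scalar $r \in K$; the analogous degree check gives $\deg \alpha_I = -\omega_{k(I)} < 0$, whence $\alpha_I = 0$. Therefore $\xi = rw$, where
\[
w = \left((-1)^{\sigma(I \cup \{k(I)\})}\frac{\partial f/\partial x_{k(I)}}{f} \ \bigg| \ |I| = n-1\right)'
\]
and $k(I)$ is the unique index not in $I$; this shows $\ker \eta_1 \subseteq K \cdot [w]$. It remains to verify three things: (i) $w \in Z_{n-1}(\partial; R_f)_{-\omega}$ by a direct check using symmetry of mixed partials; (ii) $\theta(w) = 0$ since the numerator vector equals $\psi_n(1) \in B_{n-1}(\partial f; A)$; and (iii) $[w] \neq 0$ in $H_{n-1}(\partial; R_f)$. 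The third item is the delicate point. If $w = \phi_n(\beta)$ with $\beta = g/f^j$ in normal form, the case $j = 0$ is immediate since $\phi_n(\beta) \in R^n$ while $w \notin R^n$; for $j \geq 1$, equating and clearing denominators gives $f\partial_k g - jg\,\partial f/\partial x_k = \pm f^j \partial f/\partial x_k$ for every $k$, and reducing modulo $f$ (with $j \neq 0$ in characteristic zero) yields $\bar g \cdot \overline{\partial f/\partial x_k} = 0$ in $A$ for all $k$. Because $A$ is an isolated singularity and a Cohen--Macaulay hypersurface of dimension $n-1$, the Jacobian ideal $(\overline{\partial f/\partial x_1},\ldots,\overline{\partial f/\partial x_n})$ of $A$ is $\mathfrak{m}_A$-primary of grade $n-1 \geq 1$, and so contains a non-zero-divisor on $A$; multiplying $\bar g$ against it forces $\bar g = 0$, i.e.\ $f \mid g$, contradicting the normal form of $\beta$. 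Consequently $\ker \eta_1 = K \cdot [w] \cong K$.
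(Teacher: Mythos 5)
Your parts (a) and (b) run along exactly the paper's lines: lift the bounding relation for $(\overline{a_I})$ to $R$, and for $\upsilon\geq 2$ subtract the boundary $\phi_{p+1}\left((\tilde r_G/f^{\upsilon-1})'\right)$ with $\tilde r_G=-r_G/(\upsilon-1)$ to lower $L$ below $\upsilon$; for $\upsilon=1$, $p\leq n-2$ the weighted-degree count forces $r_G=0$ (the paper also notes $\deg d_I<0$ and concludes $a_I=0$, i.e. $\xi=0$; your ``$a_I=\alpha_I f$ contradicts the normal form'' variant is equally good). In (c) your degree argument reducing $\xi$ to a scalar multiple of the explicit cycle $w$, and the check $\theta(w)=[\psi_n(\bar 1)]=0$, again coincide with the paper. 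The one genuine divergence is the proof that $[w]\neq 0$: the paper excludes $w=\phi_n(g/f^c)$ by an explicit factorization/gcd analysis of $f=f_1^{a_1}\cdots f_s^{a_s}$ and its partial derivatives, whereas you reduce $f\partial_k g-jg\,\partial f/\partial x_k=f^j\partial f/\partial x_k$ modulo $f$ to get $\bar g\,\overline{\partial f/\partial x_k}=0$ for all $k$ and then kill $\bar g$ via a non-zerodivisor in the Jacobian ideal of $A$. That is cleaner, but with one caveat: the proposition lives in Section 3, whose only standing hypothesis is $H_{p+1}(\partial f;A)=0$, so the isolated-singularity/Cohen--Macaulay justification you give for the non-zerodivisor is not available in this generality (smoothness of $V(f)$ enters only in the final theorem). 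The repair is immediate: for $p=n-1$ the standing hypothesis is $H_n(\partial f;A)=0$, which says precisely that $\left(0:_A(\overline{\partial f/\partial x_1},\ldots,\overline{\partial f/\partial x_n})\right)=0$, and this is exactly what is needed to conclude $\bar g=0$ (equivalently, by prime avoidance in the Noetherian ring $A$, the Jacobian image contains a non-zerodivisor). With that substitution your argument for (c) is complete, and in fact somewhat tidier than the paper's gcd argument.
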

\begin{proof}
Suppose if possible $\eta_\nu$ is not injective. Then there exists
non-zero $u \in \mathcal{F}_{\nu}/ \mathcal{F}_{\nu-1}$ with
$\eta_\nu(u) = 0$. Say $u = x + \mathcal{F}_{\nu -1}$. Also let $x =
[\xi] $ where $\xi \in Z_p(\partial; R_f)_{-\omega}$ and $L(\xi) =
L(x) = \nu$. Let $(a_I/f^{\upsilon} \ \mid \  |I|=p)'$ be the normal
form of $\xi$. So we have
\[
0 = \eta_\nu(u) =  \widetilde{\theta}(x)  = \theta(\xi) =
[(\ov{a_I})'].
\]
It follows that $(\ov{a_I})' = \psi_{p+1}(\ov{b})$, where $\ov{b} =
( b_G \ \mid \  |G|=p+1)'$. It follows that
\[
\ov{a_I} = \sum_{i\not \in I} (-1)^{\sigma(I\cup \{i\})}\left
(\bar{b}_{I\cup \{i\}}\bar{\frac{\partial f}{\partial x_i}}\right ).
\]

It follows that for $|I|=p$   we have the following equation in $R$:
\begin{equation}\label{t2-eq1}
a_I  = \sum_{i\not \in I} (-1)^{\sigma(I\cup \{i\})} \left (
b_{I\cup \{i\}}\frac{\partial f}{\partial x_i}\right )   + d_If,
\end{equation}
for some $d_I \in R$. Note that the above equation is of homogeneous
elements in $R$. So we have the following
\begin{equation}\label{t2-eq2}
\frac{a_I}{f^{\upsilon}}=\frac{\sum_{i\not \in I} (-1)^{\sigma(I\cup
\{i\})}b_{I\cup \{i\}}\frac{\partial f}{\partial x_i}}{f^{\upsilon}}
+ \frac{d_I}{f^{\upsilon -1}}.
\end{equation}
We consider two cases:

(a): Let $\nu \geq 2$. Set $\widetilde{b}_{I\cup \{i\}} =  -b_{I\cup
\{i\}}/(\upsilon-1)$. Then note that
\[
\frac{ b_{I\cup \{i\}} \frac{\partial f}{\partial x_i}}{f^\nu} =
\frac{\partial}{\partial x_i} \left( \frac{\widetilde{b}_{I\cup
\{i\}}}{f^{\nu-1}}\right)    - \frac{*}{f^{\nu-1}}.
\]

By equation (\ref{t2-eq2}) we have
\[
\frac{a_I}{f^\nu} = \sum_{i\not \in I} (-1)^{\sigma(I\cup \{i\})}
\frac{\partial}{\partial x_i} \left( \frac{\widetilde{b}_{I\cup
\{i\}}}{f^{\upsilon -1}} \right)-\frac{*}{f^{\nu-1}}.
\]
Put $\xi^\prime = \left ( */f^{\nu-1} : |I| = p \right )'$ and
$\delta = \left ( \widetilde{b}_{I\cup \{i\}}/f^{\nu-1} \mid i\not
\in I ,|I|=p \right )'$. Then we have
\[
\xi = \phi_{p+1}(\delta) + \xi^\prime.
\]
 So we have $x = [\xi] = [\xi^\prime]$. This yields $L(x) \leq L(\xi^\prime) \leq \nu -1$. This is a contradiction.

(b): Let $\nu = 1$ and $p \not = n-1.$ Note that $n\geq p+2.$ Also
note that $\xi \in (\mathbb{K}_p)_{-\omega}$. Thus for $|I|=p$ we
have
\[
\frac{a_I}{f}  \in (R_f(\omega_{i_1}+\cdots +\omega_{i_p})_{-\omega}
.
\]
 It follows that
 \[
 \deg a_I = \deg f - \omega + (\omega_{i_1}+ \cdots +\omega_{i_p}).
 \]
 Also note that $\deg \partial f/ \partial x_i = \deg f - \omega_i$.  By  comparing degrees in equation (\ref{t2-eq1}) we get
 $a_I = 0$ for all $I$ with $|I|=p$. Thus $\xi = 0$. So $x = 0.$ Therefore $u = 0$ a contradiction.

 (c): Let $p = n-1.$ By comparing degrees in equation (\ref{t2-eq1})
 we get $d_I = 0$ and $b_{I \cup \{i\}}=$constant. But

 \begin{align*}
\xi = \left (  \frac{\partial f}{\partial x_n}/f, -
\frac{\partial f}{\partial x_{n-1}}/f, \cdots, (-1)^{n-1}\frac{\partial
f}{\partial x_1}/f \right )'.
\end{align*}
It is easily verified that $\xi\in Z_{n-1}(\partial ; R_f)$ and that if $x=[\xi]$ then $\eta_1(x)=0.$

We prove that $\xi \not \in B_{n-1}(\partial ; R_f)$. Suppose if possible
  let $g\in R$, g.c.d$(g, f)=1$ and
\begin{align*}
 \quad \left (\frac{\partial}{\partial x_n},
-\frac{\partial}{\partial x_{n-1}}, \cdots ,
(-1)^{n-1}\frac{\partial}{\partial x_1} \right )'(g/f^c)&= \xi.\\
\end{align*}
 Thus \[\frac{\partial}{\partial x_i}(g/f^c)= \frac{\partial f}{\partial x_i}/f \quad \text{for} \quad i=1, \cdots, n.\]
 By computing left hand side we see that $f$ divides $g\partial f/\partial
 x_i$ for $i=1, \cdots , n.$
 \[\text{Let} \quad g\frac{\partial f}{\partial x_i} = fh_i.\quad  \]
 Let \[f= f_1^{a_1}f_2^{a_2}\cdots f_s^{a_s}, \quad f_j \ \text{irreducible and} \ a_j\geq1.\]

 Then \[\frac{\partial f}{\partial x_i} = \sum_{j=1}^s f_1^{a_1}\cdots f_{j-1}^{a_{j-1}}(a_jf_j^{a_j-1}\frac{\partial f_j}{\partial x_i})f_{j+1}^{a_{j+1}}\cdots f_s^{a_s}.\]

 If $a_j=1 \Rightarrow f_j $ does not divides $\partial f_j/\partial x_j$ so $f_j$ does not divides  $\partial f/\partial x_i.$

 If $a_j \geq 2 \Rightarrow f_j^{a_j-1}$ divides $\partial f/\partial x_i$ and $f_j^{a_j}$ does not divides $\partial f/\partial x_i.$ So we can write
 \[\frac{\partial f}{\partial x_i}= f_1^{a_1-1}f_2^{a_2-1}\cdots f_s^{a_s-1}V_i, \quad \text{where $f_j$ does not divides} \ V_i \ \forall \ j. \]
 Let $U=f_1f_2\cdots f_s.$ Then we have $gV_i=Uh_i.$ As g.c.d$(g, f)=1$ so g.c.d$(g, U)=1.$ So
 $f_j$ divides $V_i$ a contradiction.
  Therefore  $\xi \not \in B_{n-1}(\partial; R_f).$
 Hence $\ker(\eta_1) = K.$
 \end{proof}
By summarizing the above results, we have.
\begin{theorem}\label{H_p-filtration}
 Assume $H_{p+1}(\partial f; A)=0$. Then there exists a filtration $\{\mathcal{F}_{\nu}\}_{\nu \geq 0}$
consisting of $K-$ subspaces of $H_p(\partial ; R_f)$ with
$\mathcal{F}_{\nu} = H_p(\partial ; R_f)$ for $\nu \gg 0,
\mathcal{F}_{\nu} \supseteq \mathcal{F}_{\nu-1} $ and $\mathcal{F}_0
=0$ and  $K-$linear maps
\begin{align*}
\eta_{\upsilon}:\frac{\mathcal{F}_{\upsilon}}{\mathcal{F}_{\upsilon-1}}&
\rightarrow H_p(\partial f; A)_{(\upsilon + p)\deg f - \omega}.
\end{align*}
such that\\
(a) $\eta_{\upsilon}$ is injective for all $\upsilon \geq 2$.\\
(b) If $p\not= n-1.$ Then $\eta_1$ also injective.\\
(c) If $p=n-1.$ Then $\ker(\eta_1)= K.$
\end{theorem}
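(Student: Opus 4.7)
The plan is to assemble the pieces developed earlier in the section into the theorem statement, so the proof is really a matter of verifying that the propositions already in hand cover every clause. First, I would take $\mathcal{F}_\nu = \{x \in H_p(\partial; R_f) : L(x) \leq \nu\}$ with $L$ on cohomology classes defined as the minimum of $L(\xi)$ over cycle representatives $\xi \in Z_p(\partial; R_f)_{-\omega}$. The four filtration properties ($K$-subspace, increasing, exhausting, $\mathcal{F}_0 = 0$) are exactly the content of the proposition already proved: $K$-subspace and increasing follow from the sub-additivity statements in Proposition \ref{prop-l}, exhaustion uses finite-dimensionality of $H_p(\partial; R_f)$ applied to a $K$-basis of bounded $L$, and $\mathcal{F}_0 = 0$ comes from the observation (Lemma \ref{deg-theta}(a)) that non-zero degree $-\omega$ cycles must lie in $R_f^{\binom{n}{p}} \setminus R^{\binom{n}{p}}$, hence have $L \geq 1$.

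Next, I would produce the $\eta_\nu$ from the function $\tilde\theta$ that picks a minimal-$L$ representative and applies $\theta$. The target degree is forced by Lemma \ref{deg-theta}(b), which places $\theta(\xi)$ in $H_p(\partial f; A)_{(L(\xi)+p)\deg f - \omega}$, matching $(\nu + p)\deg f - \omega$ on $\mathcal{F}_\nu/\mathcal{F}_{\nu-1}$. Well-definedness on the quotient crucially uses the standing hypothesis $H_{p+1}(\partial f; A) = 0$ via Proposition \ref{B_p=0}, which forces $\theta$ to vanish on $B_p(\partial; R_f)_{-\omega}$; this guarantees that changing the representative by a boundary or by an element of $\mathcal{F}_{\nu-1}$ does not affect the image. $K$-linearity then reduces to the additivity of $\theta$ on normal forms in matched denominators, which is a direct computation.

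The main obstacle, and the substantive content of the theorem, is the three-part injectivity analysis. Suppose $\eta_\nu(u) = 0$ with $u = x + \mathcal{F}_{\nu-1}$ non-zero and $\xi = (a_I/f^\nu)'$ a normal-form representative with $L(\xi) = \nu$. Then $\theta(\xi) = 0$ lifts to a relation in $R$ of the form $a_I = \sum_{i \notin I}(-1)^{\sigma(I \cup \{i\})} b_{I \cup \{i\}} \partial f/\partial x_i + d_I f$. For $\nu \geq 2$ the idea is to absorb each term $b_{I \cup \{i\}}(\partial f/\partial x_i)/f^\nu$ into $\partial_i(-b_{I \cup \{i\}}/(\nu-1)f^{\nu-1})$ modulo terms with denominator $f^{\nu-1}$, yielding a decomposition $\xi = \phi_{p+1}(\delta) + \xi'$ with $L(\xi') \leq \nu - 1$; this forces $L(x) \leq \nu -1$, contradicting $x \notin \mathcal{F}_{\nu-1}$. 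The $1/(\nu - 1)$ denominator obstructs this trick at $\nu = 1$, so I would instead compare homogeneous degrees in the lifted equation. When $p \neq n - 1$ the resulting numerical constraints force $a_I = 0$ outright, whence $\xi = 0$; when $p = n-1$ the same degree count forces $d_I = 0$ and each $b_{I \cup \{i\}}$ to be a constant, so the only class potentially in $\ker \eta_1$ is a scalar multiple of $\xi_0 = (\partial f/\partial x_n/f, -\partial f/\partial x_{n-1}/f, \ldots, (-1)^{n-1}\partial f/\partial x_1/f)'$. The hardest step is showing $\xi_0$ is not a boundary: using the irreducible factorization $f = f_1^{a_1}\cdots f_s^{a_s}$, I would analyze divisibility of $\partial f/\partial x_i$ by each $f_j^{a_j - 1}$ (but not $f_j^{a_j}$), and deduce from $g \partial f/\partial x_i = f h_i$ with $\gcd(g, f) = 1$ that each $f_j$ would have to divide a factor coprime to it. This gives the $K$-dimensional kernel in the last case, completing the theorem.
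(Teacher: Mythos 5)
Your proposal is correct and follows essentially the same route as the paper: the filtration by the invariant $L$, the maps $\eta_\nu$ built from $\theta$ and $\tilde\theta$ with well-definedness resting on Proposition \ref{B_p=0}, the absorption of $b_{I\cup\{i\}}\partial f/\partial x_i/f^{\nu}$ into an exact term for $\nu\geq 2$, the degree count at $\nu=1$, and the factorization argument showing the explicit cycle built from the partials of $f$ is not a boundary when $p=n-1$. No substantive differences from the paper's argument.
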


\begin{corollary}\label{H_i(R_f)-van}
 If $H_i(\partial f; A)=0$ for $i\geq \alpha +1$. Then
 \begin{align*}
 H_i(\partial ; R_f)=
\begin{cases} 0 & \text{if} \ \alpha +1 \leq i\leq n-2 \\ K & \text{if} \
i=n-1.
\end{cases}
\end{align*}
\end{corollary}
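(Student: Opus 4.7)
The plan is to deduce Corollary \ref{H_i(R_f)-van} directly from Theorem \ref{H_p-filtration} applied at each relevant degree $p$. The hypothesis $H_i(\partial f;A)=0$ for $i\geq \alpha+1$ means that whenever $p\geq \alpha$, we have $H_{p+1}(\partial f;A)=0$, so the theorem is applicable at every $p$ in the range $\alpha+1\leq p\leq n-1$. Once the theorem is invoked, the argument becomes purely formal: the filtration $\mathcal{F}_\nu$ stabilizes at $H_p(\partial;R_f)$, so understanding the graded pieces $\mathcal{F}_\nu/\mathcal{F}_{\nu-1}$ is enough.

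First I would handle the range $\alpha+1\leq p\leq n-2$. Here $p\neq n-1$, so by part (b) of Theorem \ref{H_p-filtration} every map $\eta_\nu\colon \mathcal{F}_\nu/\mathcal{F}_{\nu-1}\hookrightarrow H_p(\partial f;A)_{(\nu+p)\deg f-\omega}$ is injective. Since $p\geq \alpha+1$, the target module $H_p(\partial f;A)$ is zero in every degree, hence every graded piece $\mathcal{F}_\nu/\mathcal{F}_{\nu-1}$ vanishes. Combined with $\mathcal{F}_0=0$ and $\mathcal{F}_\nu=H_p(\partial;R_f)$ for $\nu\gg 0$, this forces $H_p(\partial;R_f)=0$.

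Next I would treat the case $p=n-1$. Again $H_{n-1}(\partial f;A)=0$ by hypothesis (using $n-1\geq \alpha+1$), so the targets of all $\eta_\nu$ vanish. By part (a) of the theorem, $\eta_\nu$ is injective for $\nu\geq 2$, giving $\mathcal{F}_\nu/\mathcal{F}_{\nu-1}=0$ for $\nu\geq 2$. Hence $\mathcal{F}_\nu=\mathcal{F}_1$ for all $\nu\geq 1$. By part (c), $\ker(\eta_1)=K$, and since the image lives in the zero module, we get $\mathcal{F}_1=\ker(\eta_1)=K$. Taking $\nu\gg 0$ yields $H_{n-1}(\partial;R_f)=K$.

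There is no real obstacle here; the corollary is essentially a direct readout of Theorem \ref{H_p-filtration} combined with the vanishing hypothesis on $H_i(\partial f;A)$. The only mildly subtle point to flag is the implicit assumption $\alpha\leq n-1$ in the formula for the $i=n-1$ case (otherwise one would need $H_{n-1}(\partial f;A)=0$, which is not granted); under the standing hypotheses of the paper (in particular when $A$ is an isolated singularity one may take $\alpha=0$) this is automatic.
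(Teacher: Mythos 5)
Your proposal is correct and follows essentially the same route as the paper: apply Theorem \ref{H_p-filtration} at each $p$, use injectivity of the $\eta_\nu$ into the vanishing modules $H_p(\partial f;A)$ to kill the graded pieces of the filtration, and for $p=n-1$ use $\ker(\eta_1)=K$ together with the vanishing for $\nu\geq 2$ to get $\dim_K H_{n-1}(\partial;R_f)=\dim_K\mathcal{F}_1=1$. Your remark that the $i=n-1$ conclusion implicitly needs $H_{n-1}(\partial f;A)=0$ (i.e.\ $\alpha+1\leq n-1$) is a fair observation that the paper leaves unstated, but it does not change the argument.
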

\begin{proof}
Let $\alpha +1 \leq i\leq n-2..$ By  Theorem \ref{H_p-filtration}
there exist a filtration $\{\mathcal{F}_{\upsilon}\}_{\upsilon \geq
0}$ of $H_i(\partial ; R_f)$ and injective maps
\begin{align*}
\eta_{\upsilon}:\frac{\mathcal{F}_{\upsilon}}{\mathcal{F}_{\upsilon
-1}}\longrightarrow H_i(\partial f; A).
\end{align*}
Note that $\mathcal{F}_0=0$. As $H_i(\partial f; A)=0$ we get
$\mathcal{F}_1=0.$ Continuing this way we get
$\mathcal{F}_{\upsilon}=0$ for all $\upsilon.$ As
$\mathcal{F}_{\upsilon}=H_i(\partial ; R_f)$ for $\upsilon \gg 0.$
Hence $H_i(\partial ; R_f)=0.$

Let $i=n-1.$ Then $\ker(\eta_1)=K.$ So $\mathcal{F}_1/K =0.$ Thus
$\mathcal{F}_1=K.$
\begin{align*}
\text{As} \quad  \dim_K H_{n-1}(\partial ; R_f) & = \sum
\dim_K(\mathcal{F}_{\upsilon}/\mathcal{F}_{\upsilon -1})\\
& = \dim_K \mathcal{F}_1\\
& =1.
\end{align*}
\end{proof}
We now have our main result.
\begin{theorem}\label{isolated-sing}
Let $f \in R$ be quasi homogeneous. Let $A=R/(f)$ be smooth. Then
 \begin{align*}
 H_i(\partial ; H^1_{(f)}(R))=
\begin{cases} 0 & \text{if} \quad   2 \leq i\leq n-2 \ \text{or} \ i=n\\ K & \text{if}
\quad i=n-1.
\end{cases}
\end{align*}
\end{theorem}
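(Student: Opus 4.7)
The plan is to reduce everything to Corollary \ref{H_i(R_f)-van} by exploiting the short exact sequence
\[
0 \rt R \rt R_f \rt H^1_{(f)}(R) \rt 0,
\]
which is available because $R$ is a domain and so $H^0_{(f)}(R)=0$. Applying Koszul homology with respect to $\bP$ produces a long exact sequence tying $H_i(\bP;R)$, $H_i(\bP;R_f)$, and $H_i(\bP; H^1_{(f)}(R))$ together. The de Rham homology of $R$ itself is needed as input: in characteristic zero a direct computation (or the classical fact about algebraic de Rham cohomology of affine space) gives $H_n(\bP;R)=K$ and $H_i(\bP;R)=0$ for $i<n$. Consequently, for $1\le i\le n-1$ the long exact sequence collapses to an isomorphism $H_i(\bP;R_f)\cong H_i(\bP;H^1_{(f)}(R))$, while for $i=n$ we obtain
\[
0 \rt H_n(\bP;R) \rt H_n(\bP;R_f) \rt H_n(\bP;H^1_{(f)}(R)) \rt 0.
\]
The top Koszul homology $H_n(\bP;R_f)$ equals the joint kernel of the $\partial_i$ acting on $R_f$; elements of $R_f$ killed by all partial derivatives must be constants (since $R_f$ is a domain of characteristic zero), so this kernel is $K$. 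Hence the first map is the identity $1\mapsto 1$, forcing $H_n(\bP;H^1_{(f)}(R))=0$.

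It remains to compute $H_i(\bP;R_f)$ for $2\le i\le n-1$. Corollary \ref{H_i(R_f)-van} with $\alpha=1$ delivers precisely the values we want, namely $H_i(\bP;R_f)=0$ for $2\le i\le n-2$ and $H_{n-1}(\bP;R_f)=K$, provided we verify its hypothesis $H_i(\partial f;A)=0$ for every $i\ge 2$. To establish this I would show that $\partial_1 f,\ldots,\partial_n f$ form a regular sequence on $R$. The Euler identity $(\deg f)\cdot f=\sum \omega_i x_i\,\partial f/\partial x_i$ (valid in characteristic zero for quasi-homogeneous $f$) forces every critical point of $f$ to lie on $V(f)$; combined with the isolated-singularity hypothesis $V(\partial f)\cap V(f)\subseteq\{0\}$, this gives $V(\partial f)=\{0\}$ in $\Spec R$. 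So the Jacobian ideal is $\m$-primary of height $n$, and being generated by $n$ elements in the Cohen-Macaulay ring $R$, the sequence is regular. Hence $H_i(\partial f;R)=0$ for all $i\ge 1$. Feeding the short exact sequence $0\rt R\xrightarrow{\,f\,}R\rt A\rt 0$ into the Koszul long exact sequence then yields $H_i(\partial f;A)=0$ for all $i\ge 2$, exactly as required.

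Putting the pieces together, the isomorphisms $H_i(\bP;R_f)\cong H_i(\bP;H^1_{(f)}(R))$ for $1\le i\le n-1$ transport the conclusion of Corollary \ref{H_i(R_f)-van} into the desired vanishing and the identification $H_{n-1}(\bP;H^1_{(f)}(R))=K$, while the $i=n$ case was already handled. The only genuinely substantive step is the passage from ``$A$ has an isolated singularity'' to ``$\partial f$ is a regular sequence in $R$''; the rest is formal bookkeeping with long exact sequences. That passage is the main obstacle and is resolved by the Euler identity combined with the isolated-singularity hypothesis, which together pin the zero set of the Jacobian ideal to the origin.
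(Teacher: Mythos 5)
Your proposal is correct and follows essentially the same route as the paper: establish $H_i(\partial f;A)=0$ for $i\geq 2$, feed this into Corollary \ref{H_i(R_f)-van} (with $\alpha=1$), and then transfer the computation from $R_f$ to $H^1_{(f)}(R)$. The only difference is that you supply proofs of the two inputs the paper simply quotes, namely the comparison $H_i(\partial;R_f)\cong H_i(\partial;H^1_{(f)}(R))$ for $i<n$ together with $H_n(\partial;H^1_{(f)}(R))=0$ (cited there as \cite[Theorem 2.7]{TJ}, and recovered by you from the long exact sequence attached to $0\rt R\rt R_f\rt H^1_{(f)}(R)\rt 0$ plus the de Rham homology of $R$), and the vanishing $H_i(\partial f;A)=0$ for $i\geq 2$, which the paper asserts directly from the smoothness of $V(f)$ and you derive via the Euler identity and the regular-sequence argument.
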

\begin{proof}
 As $A$ is smooth, so $H_i(\partial f ; A) = 0$ for
 $i\geq 2 $. Therefore by Corollary \ref{H_i(R_f)-van}
\begin{align*}
 H_i(\partial ; R_f)=
\begin{cases} 0 & \text{if} \ \alpha +1 \leq i\leq n-2 \\ K & \text{if} \
i=n-1.
\end{cases}
\end{align*}
 By \cite[Theorem 2.7]{TJ}
 \[H_n(\partial ;
H^1_{(f)}(R))=0 \quad  \text{and} \quad H_i(\partial ;
 R_f)\equiv H_i(\partial ;
H^1_{(f)}(R)) \  \text{for} \quad i<n.\] Hence  the result.
\end{proof}

\end{document}